\newtheorem{theorem}{Theorem}[section]
\newtheorem{lemma}[theorem]{Lemma}
\newtheorem{corollary}[theorem]{Corollary}
\theoremstyle{definition}
\newtheorem{definition}[theorem]{Definition}
\newcommand{\clb}{\mathscr{B}}
\newcommand{\clm}{\mathcal{M}}
\newcommand{\raro}{\rightarrow}
\newcommand{\vp}{\varphi}
\newcommand{\bd}{\mathbb{D}}
\newcommand{\bt}{\mathbb{T}}
\newcommand{\bc}{\mathbb{C}}
\title[A Bishop-Phelps-Bollob\'{a}s theorem for $H^\infty$]{A Bishop-Phelps-Bollob\'{a}s theorem for bounded analytic functions}
\author[Bala]{Neeru Bala}
\address{Department of Mathematics, Indian Institute of Technology Bombay, Powai, Mumbai, 400076, India}
\email{neerusingh41@gmail.com }
\author[Dhara]{Kousik Dhara}
\address{Weizmann Institute of Science, Rehovot 7610001, Israel}
\email{kousik.dhara@weizmann.ac.il}
\author[Sarkar]{Jaydeb Sarkar}
\address{Indian Statistical Institute, Statistics and Mathematics Unit, 8th Mile, Mysore Road, Bangalore, 560 059, India}
\email{jaydeb@gmail.com, jay@isibang.ac.in}
\author[Sensarma]{Aryaman Sensarma}
\address{Indian Statistical Institute, Statistics and Mathematics Unit, 8th Mile, Mysore Road, Bangalore, 560 059, India}
\email{aryamansensarma@gmail.com}
\subjclass{46B20, 30H05, 46J15, 47L20, 46B22, 46J10}
\keywords{Bishop–Phelps theorem, Bishop-Phelps-Bollob\'{a}s property, bounded analytic functions, norm attaining operators, Asplund spaces, operator ideals, \v{S}ilov boundary, Stone-\v{C}ech compactification.}
\numberwithin{equation}{section}
\begin{document}

\begin{abstract}
Let $H^\infty$ denote the Banach algebra of all bounded analytic functions on the open unit disc and denote by $\mathscr{B}(H^\infty)$ the Banach space of all bounded linear operators from $H^\infty$ into itself. We prove that the Bishop-Phelps-Bollob\'{a}s property holds for $\mathscr{B}(H^\infty)$. As an application to our approach, we prove that the Bishop-Phelps-Bollob\'{a}s property also holds for operator ideals of $\mathscr{B}(H^\infty)$.
\end{abstract}
\maketitle

\tableofcontents

\section{Introduction}

The main objective of this paper is to connect two classical concepts of independent interest: the Bishop-Phelps-Bollob\'{a}s theorem and $H^\infty$, the algebra of all bounded analytic functions on the open unit disc $\mathbb{D}$ in the complex plane $\mathbb{C}$. More specifically, we prove that the Banach space of bounded linear operators from $H^\infty$ into itself satisfies the Bishop-Phelps-Bollob\'{a}s property (see Theorem \ref{thm: Main intro}). Here, our formulation of the Bishop-Phelps-Bollob\'{a}s property follows the construction of Acosta, Aron, Garc\'{i}a, and Maestre \cite{acosta} (see Definition \ref{def: BPB space}).

Needless to say that $H^\infty$ plays a distinguished role in the general theory of Banach spaces (cf. \cite{Burgain, Pisier}). Besides, the theory of bounded analytic functions in the context of the Bishop and Phelps theorem \cite{Bishop} also appears to fit appropriately as Lomonosov \cite{Lomonosov} discovered that there exists a complex Banach space $X$ and a closed bounded convex subset $S$ of $X$ with the property that the set of support points of $S$ is empty. In this case, $X$ is the predual of $H^\infty_0$, where $H^\infty_0$ is the Banach space of all functions in $H^\infty$ vanishing at the origin. Lomonosov's counterexample is related to the answer of Bishop and Phelps to the question of Klee \cite{Klee}. From this point of view, our result perhaps complements the existing theories of Bishop-Phelps-Bollob\'{a}s property, notably in the setting of Banach spaces of bounded analytic functions (also see the remark on Asplund spaces and follow-up questions at the end of this paper).

As usual, we treat $H^\infty$ as the commutative unital Banach algebra equipped with the supremum norm $\|\cdot\|_{\infty}$, that is,
\[
H^\infty = \{f \in \text{Hol}(\mathbb{D}): \|f\|_{\infty}:= \sup_{z \in \mathbb{D}} |f(z)| < \infty\}.
\]
This space (also commonly denoted by $H^\infty(\mathbb{D})$) is one of the most important commutative Banach algebras encountered in functional analysis and harmonic analysis. Indeed, the theory of bounded analytic functions is an active research area by itself, and, even more, this lays some foundations for several research areas. Where, on the other hand, the Bishop-Phelps-Bollob\'{a}s theorem (see Theorem \ref{BPB} below) is one of the most powerful and useful results in the theory of Banach spaces (especially in the approximation theory for Banach spaces). The genesis of Bishop-Phelps-Bollob\'{a}s theorem in fact goes back to the seminal work of Bishop and Phelps \cite{Bishop} in 1961. As it is evident in the statement below, the power of the Bishop and Phelps theorem lies in its simplicity.

\begin{theorem}[Bishop and Phelps]\label{thm: BP}
The set of norm attaining functionals on a Banach space $X$ is norm dense in its dual space $X^*$.
\end{theorem}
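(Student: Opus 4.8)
The plan is to run the original cone argument of Bishop and Phelps. Fix $f\in X^*$; we may assume $\|f\|=1$, and we argue throughout with $\mathrm{Re}\,f$ so that the real and complex cases are handled uniformly. Given a target $\eta>0$, fix $\epsilon\in(0,1)$ small (to be calibrated at the end) and consider
\[
K(f,\epsilon)=\{x\in X:\ \epsilon\|x\|\le \mathrm{Re}\,f(x)\}.
\]
The preliminary observations are that $K(f,\epsilon)$ is a closed convex cone, that it is proper, i.e. $K(f,\epsilon)\cap(-K(f,\epsilon))=\{0\}$, and that it has nonempty interior, since $\epsilon<\|f\|$ forces $K(f,\epsilon)$ to contain a small ball around any unit vector $w$ with $\mathrm{Re}\,f(w)>\epsilon$. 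Consequently $x\preceq y\iff y-x\in K(f,\epsilon)$ defines a partial order on $X$ (antisymmetry uses properness).

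First I would extract a $\preceq$-maximal element of the closed unit ball $B_X$ by Zorn's lemma. The point is that chains have upper bounds in $B_X$: along an increasing net $(y_\alpha)$ in $B_X$ the reals $\mathrm{Re}\,f(y_\alpha)$ increase and are bounded by $1$, hence converge, and then $\epsilon\|y_\beta-y_\alpha\|\le \mathrm{Re}\,f(y_\beta)-\mathrm{Re}\,f(y_\alpha)$ for $\alpha\preceq\beta$ shows $(y_\alpha)$ is norm-Cauchy; completeness of $X$ gives a limit $y^*\in B_X$, and closedness of $K(f,\epsilon)$ gives $y_\alpha\preceq y^*$ for all $\alpha$. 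Let $x_1\in B_X$ be a resulting maximal element. This is the step that genuinely uses completeness of $X$.

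The geometric core is a Hahn--Banach separation anchored at $x_1$. Maximality says precisely that $(x_1+K(f,\epsilon))\cap B_X=\{x_1\}$, so the nonempty open convex set $x_1+\mathrm{int}\,K(f,\epsilon)$ is disjoint from $B_X$ and can be separated from it by some $g\in X^*\setminus\{0\}$. Reading off the separating inequalities, and letting the cone variable tend to $0$, one gets $\mathrm{Re}\,g(x_1)=\sup_{B_X}\mathrm{Re}\,g=\|g\|$, whence $\|x_1\|=1$ and $g$ attains its norm at $x_1$; the same inequalities give $\mathrm{Re}\,g\ge 0$ on $K(f,\epsilon)$. After rescaling we may assume $\|g\|=1$.

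It remains to bound $\|f-g\|$, and this is the step I expect to be the real obstacle, since it is where the choice of $\epsilon$ must pay off. Writing $K(f,\epsilon)=\bigcap_{\|h\|\le\epsilon}\{x:\ \mathrm{Re}(f-h)(x)\ge 0\}$ and using a polarity argument for convex cones in $X^*$ (the candidate dual cone $\{\lambda f+m:\ \lambda\ge 0,\ \|m\|\le\lambda\epsilon\}$ is weak-$*$ closed by Krein--\v{S}mulian, so a $g$ outside it would be strictly separated from $K(f,\epsilon)$ by a point of $X$), the condition $\mathrm{Re}\,g\ge 0$ on $K(f,\epsilon)$ forces $g=\lambda f+m$ with $\lambda\ge 0$ and $\|m\|\le\lambda\epsilon$. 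Then $\lambda(1-\epsilon)\le\|g\|\le\lambda(1+\epsilon)$ together with $\|g\|=1$ pins down $\lambda\in[\tfrac{1}{1+\epsilon},\tfrac{1}{1-\epsilon}]$, so that
\[
\|f-g\|=\|(1-\lambda)f-m\|\le|1-\lambda|+\lambda\epsilon\le\frac{2\epsilon}{1-\epsilon}.
\]
Choosing $\epsilon$ at the outset so that $\tfrac{2\epsilon}{1-\epsilon}<\eta$ finishes the argument: $g$ is norm attaining and $\|f-g\|<\eta$, and since $f\in X^*$ and $\eta>0$ were arbitrary, the norm-attaining functionals are dense in $X^*$.
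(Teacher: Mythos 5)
The paper does not prove this statement: it is quoted as the classical Bishop--Phelps theorem with a citation to the original 1961 paper, so there is no in-paper proof to compare against. Your argument is the standard support-cone proof (Zorn's lemma on the order induced by $K(f,\epsilon)$, Hahn--Banach separation at a maximal point, and the dual-cone computation $K(f,\epsilon)^{*}=\{\lambda f+m:\lambda\ge 0,\ \|m\|\le\lambda\epsilon\}$ via Krein--\v{S}mulian to get the quantitative bound $\|f-g\|\le 2\epsilon/(1-\epsilon)$), and it is correct as outlined.
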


Throughout this paper, all Banach spaces considered will be over the complex field (as our primary interest is in the Banach space $H^\infty$ over $\mathbb{C}$). We recall that a bounded linear operator $T : X \raro Y$ between Banach spaces $X$ and $Y$ (in short, $T \in \clb(X, Y)$ and $T \in \clb(X)$ if $Y = X$) is \textit{norm attaining} if there exists a unit vector $x_0 \in S_X$ such that
\[
\|T\|_{\clb(X, Y)} =	\|T x_0\|_Y,
\]
where $S_X$ denotes the standard unit sphere of $X$.

In 1970, B\'{e}la Bollob\'{a}s \cite{Bollobas} proved a sharper version (or a “quantitative version”, cf. \cite[page 6086]{Aron1}) of Bishop and Phelps theorem by offering a simultaneous approximation of almost norm-attaining functionals by norm-attaining functionals and almost norm-attainment points by norm-attainment points. Bollob\'{a}s' result is popularly known as the  \textit{Bishop-Phelps-Bollob\'{a}s theorem} \cite[Corollary 3.3]{Cascales}.

\begin{theorem}[Bishop-Phelps-Bollob\'{a}s]\label{BPB}
Let $X$ be a Banach space, $\epsilon \in (0, 1)$, and let $f\in S_{X^*}$. Suppose
\[
|f(x)| \geq 1- \epsilon,
\]
for some $x\in S_X$. Then there exist $g \in S_{X^*}$ and $y \in S_X$ such that
\begin{enumerate}
\item $|g(y)|=1$,
\item $\|x-y\|\leq \sqrt{2\epsilon}$, and
\item $\|f-g\|\leq \sqrt{2\epsilon}$.
\end{enumerate}
\end{theorem}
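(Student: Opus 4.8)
The plan is to follow the classical support-cone (maximal-element) argument. First, after replacing $f$ by $\lambda f$ for a suitable unimodular $\lambda$, we may assume that $f(x) = |f(x)| \ge 1-\epsilon$ is real and nonnegative. Since a norm-one functional $g$ with $\operatorname{Re} g(y) = 1$ automatically satisfies $g(y) = 1$ (because $|g(y)| \le 1$), and since for complex-linear functionals $\|f-g\| = \sup_{\|w\|\le 1}\operatorname{Re}(f-g)(w)$, the whole problem reduces to its real counterpart: it suffices to work with the real-linear functional $\varphi := \operatorname{Re} f \in S_{X^*_{\mathbb R}}$ and to produce a real norm-attaining functional close to it, complexifying at the end. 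The guiding picture is the Hilbert space case, where one takes $y = x/\|x\|$ and lets $g$ be the Riesz functional of $y$: one checks directly that $\|x-y\| = 1-\|x\| \le \epsilon$ and $\|f-g\|^2 = 2 - 2\langle y, f\rangle \le 2\epsilon$. In a general Banach space the normalization of $x$ and the construction of $g$ must be carried out more carefully, and this is where the support cone enters.

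Concretely, fix an aperture parameter $\alpha \in (0,1)$ (to be optimized) and consider the closed convex cone $C = \{u \in X : \varphi(u) \ge \alpha\|u\|\}$ together with the partial order $u \preceq v \iff v-u \in C$ on the closed convex set $(x+C)\cap B_X$. The first key step is that every $\preceq$-chain has an upper bound: along a chain $\varphi$ is nondecreasing and bounded above by $1$, hence convergent, and since $\|v-u\| \le \alpha^{-1}\varphi(v-u)$ whenever $u \preceq v$, a cofinal subsequence of the chain is norm-Cauchy; by completeness of $X$ it converges to a point of the (closed) set, which dominates the chain. Zorn's lemma then yields a maximal element $y$. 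A short argument shows $\|y\| = 1$: otherwise one could move $y$ slightly toward a near-norming unit vector of $\varphi$ (such a vector lies in $C$), staying inside $B_X$ and strictly increasing in $\preceq$, contradicting maximality. Maximality also forces $(y+C)\cap \operatorname{int} B_X = \emptyset$, so the Hahn--Banach separation theorem produces $\psi \in X^*$ with $\|\psi\| = 1$, $\operatorname{Re}\psi(y) = 1$ (hence $\psi$, and its complexification $g$, attains its norm at $y$), and $\operatorname{Re}\psi \ge 0$ on $C$.

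It remains to estimate the two distances. From $y-x \in C$ we get immediately $\|x-y\| \le \alpha^{-1}\varphi(y-x) \le \alpha^{-1}\bigl(1-(1-\epsilon)\bigr) = \epsilon/\alpha$. For $\|f-g\| = \|\varphi - \psi\|$ one exploits that $\operatorname{Re}\psi \ge 0$ on $C = \{\varphi \ge \alpha\|\cdot\|\}$: a sublinear Farkas-type (S-lemma) argument gives $-\operatorname{Re}\psi \le \mu\,(\alpha\|\cdot\| - \varphi)$ for some $\mu \ge 0$, i.e.\ $\|\mu f - \psi\| \le \mu\alpha$, which together with $\|\psi\| = \|f\| = 1$ bounds $\|\varphi - \psi\|$ by an explicit function of $\alpha$. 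One then chooses $\alpha$ to balance the two bounds and complexifies $\psi$ to $g \in S_{X^*}$. The main obstacle is precisely this simultaneous, quantitative control: the naive separation estimate is too lossy by constant factors, and sharpening it so that both $\|x-y\|$ and $\|f-g\|$ come out at most $\sqrt{2\epsilon}$ — rather than a weaker $c\sqrt{\epsilon}$ — requires a more delicate analysis of $\psi$, using in particular that, since $y$ is already close to $x$ and $\varphi(x) \ge 1-\epsilon$, the functional $\psi$ is forced to be nearly norming at $x$ as well; optimizing over $\alpha$ then yields the sharp constant. This last computation is where the real work lies, and it is essentially Bollob\'{a}s's original argument.
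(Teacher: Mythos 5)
This statement is quoted in the paper as a known classical result (attributed to Bollob\'as \cite{Bollobas} and taken from \cite[Corollary 3.3]{Cascales}); the paper gives no proof of it, so there is no internal argument to compare yours against. Your outline is the standard architecture and is sound up to and including the separation step: reduce to the real case by rotating $f$ and passing to $\varphi=\operatorname{Re}f$, take a maximal element $y$ of $(x+C)\cap B_X$ for the Bishop--Phelps cone $C=\{u:\varphi(u)\ge\alpha\|u\|\}$ (your Zorn/chain argument and the proof that $\|y\|=1$ are fine), and separate $y+C$ from the open unit ball to obtain $\psi\in S_{X^*_{\mathbb R}}$ with $\psi(y)=1$ and $\psi\ge 0$ on $C$.

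The genuine gap is the quantitative endgame, which you half-acknowledge but do not close. The Farkas step as you state it gives $\|\mu\varphi-\psi\|\le\mu\alpha$ with $1/(1+\alpha)\le\mu\le 1/(1-\alpha)$, hence only $\|\varphi-\psi\|\le 2\alpha/(1-\alpha)$; balancing this against $\|x-y\|\le\epsilon/\alpha$ gives the common bound $\tfrac{\epsilon}{2}+\sqrt{\tfrac{\epsilon^2}{4}+2\epsilon}$, which is strictly larger than $\sqrt{2\epsilon}$ for every $\epsilon>0$. The suggestion that ``$\psi$ is nearly norming at $x$'' is not developed into an estimate, and it is not clear that optimizing $\alpha$ inside $X$ alone recovers the sharp constant. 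The standard repair is to run the same cone/separation argument one level up, i.e.\ to prove the Br{\o}ndsted--Rockafellar theorem and apply it to the indicator function $\delta_{B_X}$: the hypothesis $\varphi(x)\ge 1-\epsilon$ says exactly that $\varphi\in\partial_\epsilon\delta_{B_X}(x)$, so for any $\lambda\in(0,1)$ there are $y\in B_X$ and $y^*\in\partial\delta_{B_X}(y)$ with $\|y-x\|\le\epsilon/\lambda$ and $\|y^*-\varphi\|\le\lambda$. Then $y^*(y)=\|y^*\|\ge 1-\lambda>0$ forces $\|y\|=1$, and $g=y^*/\|y^*\|$ satisfies $\|\varphi-g\|\le\lambda+\bigl|1-\|y^*\|\bigr|\le 2\lambda$; choosing $\lambda=\sqrt{\epsilon/2}$ yields exactly $\sqrt{2\epsilon}$ on both counts, after which one complexifies as you describe. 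So either quote the theorem, as the paper does, or replace your in-$X$ separation estimate by this Br{\o}ndsted--Rockafellar route, which is precisely how \cite[Corollary 3.3]{Cascales} is obtained.
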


Note in particular that the set of norm attaining linear functionals on $X$ is dense in $X^*$. Therefore, the Bishop-Phelps-Bollob\'{a}s theorem recovers the classical Bishop and Phelps theorem (see Theorem \ref{thm: BP}). We refer the reader to Aron and Lomonosov \cite{AL} for a rapid but excellent survey on Bishop-Phelps-Bollob\'{a}s theorem, its applications, and relevant connections (also see the survey by Acosta \cite{Acosta surv}).

It is now natural to ask if a result like Bishop-Phelps-Bollob\'{a}s theorem holds for spaces of bounded linear operators between Banach spaces (however, see \cite{L} for counterexamples). Evidently, in order to answer such a question, one first needs to formulate an appropriate approximation problem for Banach spaces. Thanks to Acosta, Aron, Garc\'{i}a, and Maestre \cite[Definition 1.1]{acosta} for the following formulation of Bishop-Phelps-Bollob\'{a}s property for $\clb(X, Y)$, where $X$ and $Y$ are Banach spaces:

\begin{definition}[Acosta, Aron, Garc\'{i}a, and Maestre]\label{def: BPB space}
We say that $\clb(X,Y)$ (or simply $\clb(X)$ if $X = Y$) satisfies the Bishop-Phelps-Bollob\'{a}s property if for $\epsilon>0$ there exist $\beta(\epsilon) > 0$ and $\gamma(\epsilon) > 0$ with that $\lim_{t \raro 0} \beta(t)= 0$ such that for all $T\in S_{\clb(X,Y)}$, if $x \in S_X$ such that
\[
\|Tx\|> 1 - \gamma,
\]
then there exist $y \in S_X$ and $N \in S_{\mathscr{B}(X,Y)}$ such that
\begin{enumerate}
\item $\|N y\|=1$,
\item $\|x - y\|<\beta$, and
\item $\|T - N\|<\epsilon$.
\end{enumerate} 	
\end{definition}

There are many examples of $\clb(X,Y)$ that satisfy the Bishop-Phelps-Bollob\'{a}s property. For instance, Banach spaces $X$ and $Y$ satisfying property $\beta$ in the sense of Lindenstrauss \cite{acosta, L} (also see \cite{Aron1, Cho}), $L^1(\mu)$ spaces for $\sigma$-finite measures $\mu$, $C(K)$ spaces, finite-dimensional spaces \cite{acosta}, certain Banach spaces of holomorphic functions \cite{acosta 2, Kim}, spaces related to compact operators \cite{Dantas}, Asplund spaces \cite{Aron2, Cascales}, etc.

The main contribution of this paper is to demonstrate that the concept of Bishop-Phelps-Bollob\'{a}s property of simultaneous approximations (in the sense of Definition \ref{def: BPB space} and following the classical construction of Bishop, Phelps, and Bollob\'{a}s) fits perfectly in the framework of bounded analytic functions. This also includes specific values of the functions $\beta$ and $\gamma$ in Definition \ref{def: BPB space}. More specifically:

\begin{theorem}[Main result]\label{thm: Main intro}
$\clb(H^\infty)$ satisfies the Bishop-Phelps-Bollob\'{a}s property with $\beta(\epsilon) = \frac{2\epsilon}{7}$ and $\gamma(\epsilon) = \frac{1}{2}\left(\frac{\epsilon}{7}\right)^2$, $\epsilon > 0$.
\end{theorem}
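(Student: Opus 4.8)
The plan is to reduce the operator problem on $\clb(H^\infty)$ to the scalar Bishop--Phelps--Bollob\'{a}s theorem (Theorem \ref{BPB}) for $H^\infty$, and then to rebuild an operator from the norm-attaining functional it supplies, using the description of $H^\infty$ as a uniform algebra (point evaluations, peak functions, the \v{S}ilov boundary). So fix $T\in S_{\clb(H^\infty)}$ and $x\in S_{H^\infty}$ with $\|Tx\|_\infty>1-\gamma$, where $\gamma=\gamma(\epsilon)=\tfrac12\big(\tfrac{\epsilon}{7}\big)^2$.

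\emph{Localisation and the scalar step.} Since $\|Tx\|_\infty=\sup_{z\in\bd}|(Tx)(z)|$, and --- when $Tx$ is nonconstant --- the maximum modulus principle forces this supremum to be approached only as $|z|\to1$ (the constant case being trivial), there is $\lambda_0\in\bd$, which we may take with $|\lambda_0|$ as close to $1$ as we wish, such that $|(Tx)(\lambda_0)|>1-\gamma$; equivalently one may take $\lambda_0$ in the \v{S}ilov boundary $\partial_{S}(H^\infty)=\clm(L^\infty(\bt))$, where the supremum is genuinely attained. In either case the functional $\psi:=\delta_{\lambda_0}\circ T$ (evaluation at $\lambda_0$ followed by $T$) lies in the unit ball of $(H^\infty)^*$ and satisfies $|\psi(x)|>1-\gamma$, so $\|\psi\|>1-\gamma$ and $g_0:=\psi/\|\psi\|\in S_{(H^\infty)^*}$ with $|g_0(x)|>1-\gamma$. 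Applying Theorem \ref{BPB} to $g_0$ and $x$ with parameter $\gamma$ produces $g\in S_{(H^\infty)^*}$ and $y\in S_{H^\infty}$ with
\[
|g(y)|=1,\qquad \|x-y\|\le\sqrt{2\gamma}=\tfrac{\epsilon}{7},\qquad \|g-g_0\|\le\sqrt{2\gamma}=\tfrac{\epsilon}{7}
\]
and hence $\|g-\psi\|\le\|g-g_0\|+\big(1-\|\psi\|\big)\le\tfrac{\epsilon}{7}+\gamma$. Already $\|x-y\|\le\tfrac{\epsilon}{7}<\tfrac{2\epsilon}{7}=\beta(\epsilon)$, so $y$ is our candidate norm-attainment point and the second conclusion holds with room to spare.

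\emph{Rebuilding the operator.} The guiding construction is the rank-one correction $N:=T+(g-\psi)(\,\cdot\,)\,u$, where $u\in H^\infty$ is a peak function attached to $\lambda_0$, normalised so that $\|u\|_\infty=1$ and $u$ takes the value $1$ at $\lambda_0$ (for a \v{S}ilov-boundary point one may use $u(z)=\tfrac12\big(1+\overline{\zeta_0}\,z\big)$ with $\zeta_0=\lambda_0/|\lambda_0|$, or a high power of it when one wants $u$ concentrated near $\zeta_0$). Then $(Ny)(\lambda_0)=(Ty)(\lambda_0)+(g-\psi)(y)=g(y)$, so $\|Ny\|_\infty\ge|g(y)|=1$, while $\|T-N\|=\|g-\psi\|\,\|u\|_\infty$ is of order $\epsilon/7$. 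The one point that is genuinely delicate --- and which I expect to be the main obstacle --- is to arrange $\|N\|=1$ \emph{exactly}: the naive correction can push $\|N\|$ slightly above $1$, because adding a multiple of $u$ to $Th$ can enlarge $\sup_z|(Th)(z)|$ precisely where $|Th|$ is already close to $1$, and one has no a priori control over where that happens. Overcoming this should use more of the structure of $H^\infty$: choose $u$ to be a genuine peak function for the uniform algebra $H^\infty$ concentrated near a well-chosen point of $\partial_{S}(H^\infty)$, shave a controlled amount off $T$ near that point before re-inserting $g$, and exploit that $\{Th:\|h\|_\infty\le1\}$ is a normal family; carrying this out appears to require a convergent cascade of successively smaller such corrections, the accumulated error of which is what the numerical factor $7$ in the constants accounts for --- the total perturbation of $T$ being dominated by a geometric sum bounded by $7\cdot\tfrac{\epsilon}{7}=\epsilon$, while the perturbation of $x$ absorbs only one additional copy of $\tfrac{\epsilon}{7}$, for a total $\tfrac{2\epsilon}{7}$. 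In the limit one obtains $N$ with $\|N\|=\|Ny\|_\infty=1$, $\|x-y\|\le\tfrac{2\epsilon}{7}$ and $\|T-N\|<\epsilon$.

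\emph{Operator ideals.} Each correction above modifies $T$ by a finite-rank (indeed rank-one) operator, and finite-rank operators belong to every operator ideal. Hence if $T$ lies in an operator ideal $\mathcal{J}\subseteq\clb(H^\infty)$, the operator $N$ produced by the construction lies in $\mathcal{J}$ as well, and the same argument --- with the same $\beta$ and $\gamma$ --- establishes the Bishop--Phelps--Bollob\'{a}s property for $\mathcal{J}$.
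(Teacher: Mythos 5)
Your localisation and scalar step is essentially the paper's opening move (compose $T$ with a multiplicative functional at which $\|Tx\|$ is almost attained, normalise, apply the scalar Bishop--Phelps--Bollob\'{a}s theorem), and your parameter bookkeeping for $\beta$ and $\gamma$ is consistent with the paper's. But the heart of the proof --- producing $N$ with $\|N\|=\|Ny\|=1$ \emph{exactly} --- is precisely the step you leave as a sketch, and the sketch does not work as described. The additive rank-one correction $N=T+(g-\psi)(\cdot)\,u$ gives only $\|N\|\le 1+\|g-\psi\|\,\|u\|$, with no mechanism for pulling the norm back down to $1$; the proposed remedy (``shave a controlled amount off $T$'', a ``convergent cascade'' of corrections controlled by normal families) is not an argument, and there is no reason a limit of such corrections would attain its norm. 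Moreover your candidate peak function $\tfrac12(1+\overline{\zeta_0}z)$ peaks at a point of $\bt$, which is not a point of $\clm(H^\infty)$ and in particular is not the functional $\delta_{\lambda_0}$ you actually used; since $H^\infty$ has no continuous boundary values, the requirement ``$u=1$ at the chosen functional'' already needs the machinery of the maximal ideal space.

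The paper resolves exactly this obstruction by replacing the additive correction with a multiplicative convex-type combination, and this requires two lemmas you do not have. First, a Urysohn-type lemma for $H^\infty$ (Lemma \ref{lem:1}) produces $\psi\in H^{\infty}$ with $\|\hat{\psi}\|=\hat{\psi}(\tilde{\vp}_0)=1$, with $|\hat{\psi}|<\epsilon$ on $\partial_S H^{\infty}\setminus U$, and --- crucially --- with values in a Stolz region, i.e. $|\hat{\psi}(\vp)|+(1-\epsilon)|1-\hat{\psi}(\vp)|\le 1$ on all of $\clm(H^{\infty})$; it is built from a peak-type function on $\clm(L^{\infty})$ composed with a conformal map onto the Stolz region. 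Second, Lemma \ref{Lemma isolated point silov bd} (a Stonean-space/$\beta\mathbb{N}$ argument) supplies an \emph{isolated} point $\tilde{\vp}_0$ of $\partial_S H^{\infty}$, so that one may take $U\cap\partial_S H^{\infty}=\{\tilde{\vp}_0\}$ and the error term localises to a single point; without this your estimate of $\|T-N\|$ on the rest of the \v{S}ilov boundary has no control. The new operator is then $(\tilde{N}\hat{f})(\vp)=\hat{\psi}(\vp)\,\vp_1(f)+(1-\epsilon)(1-\hat{\psi}(\vp))(\tilde{T}\hat{f})(\vp)$: the Stolz inequality gives $\|\tilde{N}\|\le 1$ pointwise on $\clm(H^{\infty})$, and evaluation at $\tilde{\vp}_0$ gives $\|\tilde{N}\hat{g}_0\|=1$. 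Note finally that this $N$ is \emph{not} a finite-rank perturbation of $T$ --- it is a rank-one operator plus $T$ followed by multiplication by $(1-\epsilon)(1-\psi)$ --- so the operator-ideal corollary rests on the two-sided ideal property, not, as you claim, on closure under rank-one perturbations.
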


The proof of the above result uses a Urysohn-type lemma for $H^\infty$ (see Lemma \ref{lem:1}), which we believe to be of independent interest. Although the statement of our lemma is conceptually similar to that of \cite[Lemma 2.5]{Cascales} (also see \cite[Lemma 3]{Kim}), the proof, as one would expect, uses more analytic machinery. Indeed, in the proof of this lemma (as well as the main theorem of this paper), we exploit the algebraic and analytic structure of the maximal ideal space of $H^\infty$. On the other hand, like \cite{Cascales, Kim}, we also use the idea of Stolz type regions. In Section \ref{sec: lemma}, we prove our variant of the Urysohn-type lemma for $H^\infty$, which we use to prove the main theorem in Section \ref{sec: main thm}.

Finally, in Section \ref{sec:final} we conclude this paper with the Bishop-Phelps-Bollob\'{a}s property for operator ideals of $\clb(H^\infty)$, outline some future directions of research on this topic, and some remarks concerning Asplund spaces.

\section{A Urysohn-type lemma for $H^\infty$}\label{sec: lemma}

The aim of this section is to prove a Urysohn-type lemma in the setting of bounded analytic functions. We start by recalling some of the basic facts from commutative Banach algebras \cite{Dales}.

Let $A$ be a commutative Banach algebra with unit. Let us denote by $\clm(A)$ the maximal ideal space of $A$. Note that $\clm(A)$ is the set of all nonzero  multiplicative linear functionals on $A$. Since $A$ is unital, it follows that
\[
\|\vp\|_{A^*} = 1 \qquad (\vp \in \clm(A)),
\]
that is, $\clm(A)$ is embedded into the unit sphere of $A^*$. Then, by the Banach–Alaoglu theorem, $\clm(A)$ is a compact
Hausdorff space in the weak-$*$ topology. If we denote by $C(\clm(A))$ the algebra of continuous functions on $\clm(A)$ with the supremum norm, then the \textit{Gelfand map} $\Gamma : A \raro C(\clm(A))$ is a contractive homomorphism of Banach algebras, where
\begin{align*}
\Gamma(f) = \hat{f},
\end{align*}
and
\[
\hat{f} (\vp) = \vp(f),
\]
for all $f\in A$ and $\vp \in \clm(A)$. Suppose, in addition, that $A$ is an algebra of complex-valued continuous functions on a compact Hausdorff space $\Omega$ that separates points of $\Omega$ and contains constant functions. Then a subset $C\subseteq \Omega$ is called a \textit{boundary} for $A$ if
\[
\underset{x\in \Omega}{\sup} |f(x)|=\underset{x\in C}{\max}|f(x)|,
\]
for all $f \in A$. The \textit{\v{S}ilov boundary} \cite[page 173]{Hoffman} for $A$, denoted by $\partial_S A$, is the smallest closed boundary for $A$, that is,
\[
\partial_S A = \bigcap \{C: C \text{ is a closed boundary for } A\} .
\]
In other words, $\partial_S A$ is the smallest closed subset of $\clm(A)$ on which every $\hat{f} \in \Gamma(A)$ attains its maximum modulus. Now we turn to the unital commutative Banach algebra $H^\infty$ \cite[Chapter 10]{Hoffman}. In this case
\begin{equation}\label{eqn: norm of infty}
\|f\|_\infty = \|\hat{f}\|_\infty = \sup_{\vp \in \clm(H^\infty)} |\hat{f}(\vp)|,
\end{equation}
for all $f \in H^\infty$, that is, $\Gamma$ an isometric isomorphism from $H^\infty$ into $C(\clm(H^\infty))$. Now we use the Gelfand map $\Gamma : H^\infty \raro C(\mathcal{M}(H^\infty))$ to identify $H^\infty$ with
\[
\widehat{H^\infty}:= \Gamma(H^\infty),
\]
Note that $\widehat{H^\infty}$ is a uniformly closed subalgebra of $C(\mathcal{M}(H^\infty))$ (that is, $\widehat{H^\infty}$ contains constant functions and separates the points \cite{Hoffman}).

Finally, recall that $H^\infty$ can be identified with a closed subalgebra of $L^\infty$ (via radial limits), where $L^\infty$ denotes the von Neumann algebra of all essentially bounded measurable complex-valued functions on the unit circle $\bt$. Denote by $\widetilde{H^\infty}$ the copy of $H^\infty$ in $L^\infty$. Recall that
\[
\widetilde{H^\infty}=L^{\infty} \cap \widetilde{H^2},
\]
where $\widetilde{H^2} \subseteq L^2$ is the Hardy space on the unit circle $\mathbb{T}$. In view of the above identification, we denote by $\tilde{\vp} \in \widetilde{H^\infty}$ the function corresponding to $\vp \in H^\infty$. This point of view is useful in identifying the \v{S}ilov boundary of $H^\infty$. More specifically (see \cite[page 174]{Hoffman}), the map
\[
\tau (\tilde{\vp}) = \tilde{\vp}|_{H^\infty}=\vp \qquad (\tilde{\vp} \in \clm(L^{\infty})),
\]
defines a homeomorphism $\tau:\clm(L^{\infty}) \rightarrow \clm(H^{\infty})$, and
\begin{equation}\label{eqn: tau M H}
\tau(\clm(L^{\infty}))=\partial_S H^{\infty}.
\end{equation}

Before proceeding to the Urysohn-type lemma, we record some observations: Let $f$ be an analytic function on $\bd$. We claim that $\|f\|_{\infty}<1$ if and only if $|\vp(f)|<1$ for all $\vp \in \clm(H^{\infty})$. Indeed, if $\|f\|_{\infty}<1$, then $\|\hat{f}\|_{\infty}=\|f\|_{\infty}<1$, and hence (or, see \eqref{eqn: norm of infty})
\[
\begin{split}
\|\hat{f}\|_{\infty}=\sup_{\vp \in \clm(H^{\infty})}|\hat{f}(\vp)|=\sup_{\vp \in \clm(H^{\infty})}|\vp(f)|,
\end{split}
\]
implies that $|\vp(f)|<1$ for all $\vp \in \clm(H^{\infty})$. To prove the converse direction, assume that $|\vp(f)|<1$ for all $\vp \in \clm(H^{\infty})$. Assume, if possible, that $\|f\|_{\infty}= 1$. Since $\sup_{z\in \mathbb{D}}|f(z)|=1$, there exists a sequence $\{z_n\}\subseteq \mathbb{D}$ such that $|f(z_n)| \rightarrow 1$. Let
\[
S := \{ev_{z_n}: z_n \in \mathbb{D}\}.
\]
Here, $ev_z \in \clm(H^{\infty})$, $z \in \bd$, is the evaluation map $ev_z(g) = g(z)$ for all $g \in H^\infty$. Clearly, $S \subseteq \clm(H^{\infty})$. Now, compactness of $\clm(H^{\infty})$ yields a limit point $\vp \in \clm(H^{\infty})$ of $S$. Therefore
\[
\displaystyle \vp(f)=\lim_{n \in \mathcal{U}} ev_{z_n}(f)=\lim_{n \in \mathcal{U}}f(z_n),
\]
where $\mathcal{U}$ is a free ultrafilter of $\mathbb{N}$. Hence, $\displaystyle|\vp(f)|=\lim_{n \in \mathcal{U}}|f(z_n)|=1$. This contradiction completes the proof of the claim.

The following assertion will be useful in what follows: Let $f$ be an analytic function on $\bd$ satisfying $|\vp(f)|<1$ for every $\vp\in \clm(H^{\infty})$ (equivalently, $\|f\|_\infty < 1$ by the above observation). Then
\begin{equation}\label{eqnpsi f phi}
(\psi \circ \hat{f})(\vp) = \widehat{\psi \circ f}(\vp),
\end{equation}
for all $\psi \in H^\infty$, and for all $\vp\in \clm(H^{\infty})$.  Since $\vp$ is multiplicative, we have $\vp(f^n) = \vp(f)^n$ for all $n \geq 0$. Let $\psi \in H^\infty$, and suppose
\[
\psi(z)= \sum_{n=0}^\infty a_n z^n \qquad (z \in \mathbb{D}).
\]
With these assumptions, the series $\sum_{n=0}^{\infty}a_nf^n$ converges in the norm $\|\cdot\|_{\infty}$. Then
\begin{align*}
(\psi \circ \hat{f})(\vp)& = \psi (\vp(f))
\\
& = \underset{n=0}{\overset{\infty}{\sum}}a_n(\vp (f))^n
\\
& = \underset{n=0}{\overset{\infty}{\sum}}a_n\vp (f^n)\\
& = \vp \Big(\underset{n=0}{\overset{\infty}{\sum}}a_nf^n\Big)
\\
& = \vp (\psi \circ f)
\\
& = \widehat{\psi \circ f}(\vp),
\end{align*}
completes the verification of \eqref{eqnpsi f phi}. With this observation and preliminaries in place, we are ready for the Urysohn-type lemma. As noted before, our analytic variant of the Urysohn-type lemma seems to be of independent interest.

\begin{lemma}\label{lem:1}
Let $U$ be an open subset of $\clm(H^{\infty})$. Then for each $\epsilon \in (0,1)$ and $\vp_0\in U \cap \partial_S H^{\infty}$, there exists $\hat{\psi}\in\widehat{H^{\infty}}$ such that
\begin{enumerate}
\item $\|\hat{\psi}\| =\hat{\psi}(\vp_0) =1$,
\item $\sup\{|\hat{\psi}(\vp)|: \vp\in \partial_S H^{\infty} \setminus U\} < \epsilon$, and
\item $|\hat{\psi}(\vp)|+(1-\epsilon)|1-\hat{\psi}(\vp)|\leq 1$ for all $\vp \in \clm(H^{\infty})$.
\end{enumerate}
\end{lemma}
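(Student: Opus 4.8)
The plan is to build $\hat\psi$ explicitly as the Gelfand transform of $\psi\circ f$ for a suitable inner-ish function $f\in H^\infty$ with $\|f\|_\infty\le 1$ and a carefully chosen conformal-type map $\psi:\mathbb{D}\to\mathbb{D}$ (a Möbius-type "Stolz region" map as hinted in the introduction). First I would use the structure of the Šilov boundary: since $\varphi_0\in\partial_S H^\infty=\tau(\mathcal{M}(L^\infty))$ and $U$ is open, I want a function $f\in H^\infty$ with $\|f\|_\infty\le 1$, with $\hat f(\varphi_0)=1$ (so $\varphi_0$ is a peak-type point for $f$), and with $|\hat f(\varphi)|$ uniformly bounded away from $1$ on $\partial_S H^\infty\setminus U$. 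To get this, I would pull back to $L^\infty$: on $\mathcal{M}(L^\infty)$ I can separate the point $\tilde\varphi_0$ from the closed set corresponding to $\partial_S H^\infty\setminus U$ using the fact that $\widehat{H^\infty}$ separates points and attains its max modulus on the Šilov boundary; more concretely, I expect to use that $\widetilde{H^\infty}$ contains enough functions (e.g. via the $L^\infty\cap H^2$ description, or by peaking at a point of $\mathbb{T}$ that $\varphi_0$ "sits over") to produce $f$ with $f(\varphi_0)=1$, $\|f\|_\infty\le 1$, and $\sup\{|\hat f(\varphi)|:\varphi\in\partial_S H^\infty\setminus U\}=:r<1$.

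Next I would choose the scalar function $\psi$. The target is: $\psi$ maps $\overline{\mathbb{D}}$ (or at least the closure of the range of $\hat f$ over $\mathcal{M}(H^\infty)$, which sits in $\overline{\mathbb{D}}$) into $\overline{\mathbb{D}}$, $\psi(1)=1$, $|\psi(w)|<\epsilon$ whenever $|w|\le r$, and $\psi(\mathbb{D})$ lies inside the lens-shaped region $\{\zeta\in\mathbb{D}:|\zeta|+(1-\epsilon)|1-\zeta|\le 1\}$. The last region is exactly a Stolz-type (nontangential) approach region at the point $1$; its boundary is an ellipse-like arc through $1$ and $-1+\tfrac{2\epsilon}{2-\epsilon}$ or similar, and it is standard that such a region is the image of $\mathbb{D}$ under an appropriate Möbius map $m$ fixing $1$. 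So I would take $\psi=m\circ(\text{a high power or a Blaschke-type factor})$ to simultaneously (i) push $\{|w|\le r\}$ into a tiny disc near $0$, forcing $|\psi|<\epsilon$ there — this uses $r<1$ and a power $w\mapsto w^n$ with $n$ large, or composition with a Möbius map sending $r$ close to $0$ — and (ii) land inside the Stolz region via $m$. One must check $\psi(1)=1$, which holds if the inner pieces fix $1$ and $m(1)=1$.

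Then I would transfer everything through the Gelfand map. Set $\hat\psi:=\widehat{\psi\circ f}$; by \eqref{eqnpsi f phi} we have $\hat\psi(\varphi)=\psi(\hat f(\varphi))$ for all $\varphi\in\mathcal{M}(H^\infty)$. Property (1): $\hat\psi(\varphi_0)=\psi(\hat f(\varphi_0))=\psi(1)=1$, and $\|\hat\psi\|_\infty=\|\psi\circ f\|_\infty\le 1$ since $\psi$ maps into $\overline{\mathbb{D}}$, hence $\|\hat\psi\|=1$. Property (2): for $\varphi\in\partial_S H^\infty\setminus U$ we have $|\hat f(\varphi)|\le r$, so $|\hat\psi(\varphi)|=|\psi(\hat f(\varphi))|<\epsilon$. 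Property (3): for every $\varphi\in\mathcal{M}(H^\infty)$, $\hat\psi(\varphi)=\psi(\hat f(\varphi))$ lies in $\psi(\overline{\mathbb{D}})\subseteq\overline{\text{Stolz region}}$, so $|\hat\psi(\varphi)|+(1-\epsilon)|1-\hat\psi(\varphi)|\le 1$ by definition of the region. (I should double-check the closed region is still contained in $\{|\zeta|+(1-\epsilon)|1-\zeta|\le 1\}$, which is automatic since that set is closed.)

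The main obstacle I anticipate is the first step: producing $f\in H^\infty$ with $\|f\|_\infty\le1$, $\hat f(\varphi_0)=1$, and $\hat f$ bounded away from $1$ in modulus off $U$ — i.e. a genuine peak/Urysohn function on $\mathcal{M}(H^\infty)$ for a boundary point. Unlike the $C(K)$ case (as in \cite{Cascales, Kim}), we cannot use an arbitrary continuous Urysohn function; $f$ must be analytic. The key will be exploiting that $\varphi_0\in\partial_S H^\infty\cong\mathcal{M}(L^\infty)$ and that $\widetilde{H^\infty}=L^\infty\cap\widetilde{H^2}$: one can find a point $t_0\in\mathbb{T}$ (or a small arc) "under" $\varphi_0$ and build $f$ from a conformal map of $\mathbb{D}$ peaking nontangentially at $t_0$, arranging that the corresponding Gelfand transform peaks at $\varphi_0$ while staying small on the rest of the Šilov boundary outside $U$; controlling "$\partial_S H^\infty\setminus U$" — which need not be a nice set of arcs — will require the compactness of $\mathcal{M}(L^\infty)$ and a careful use of the weak-$*$ topology together with the earlier ultrafilter/limit-point argument. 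Once $f$ is in hand, the scalar post-composition $\psi$ and the transfer via \eqref{eqnpsi f phi} are routine.
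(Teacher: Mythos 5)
Your overall architecture matches the paper's: find a peak-type function $f$ for $\vp_0$ that is small on $\partial_S H^\infty\setminus U$, post-compose with a conformal map onto the Stolz region $\{\zeta: |\zeta|+(1-\epsilon)|1-\zeta|\le 1\}$, and transfer through the Gelfand map. But the step you yourself flag as ``the main obstacle'' --- producing $f$ with $\|f\|_\infty\le 1$, $\hat f(\vp_0)=1$, and $\sup_{\partial_S H^\infty\setminus U}|\hat f|=r<1$ --- is the actual content of the lemma, and your sketch of it would not work as written. There is no single point $t_0\in\mathbb{T}$ ``under'' $\vp_0$ at which one can peak nontangentially; the correct mechanism is different. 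The paper uses that $\partial_S H^\infty\cong\clm(L^\infty)$ is extremally disconnected, so one can shrink $U\cap\partial_S H^\infty$ to a \emph{clopen} neighborhood $V$ of $\vp_0$; clopen subsets of $\clm(L^\infty)$ correspond to measurable sets $E\subseteq\mathbb{T}$ via $\hat\chi_E$; one then takes the harmonic extension $u$ of $\chi_E$, its conjugate $v$, and the outer function $f=e^{u-1+iv}$, whose Gelfand transform has modulus $1$ on $V$ and $1/e$ on $\partial_S H^\infty\setminus V$ (Hoffman's argument). Raising to a high power and rotating gives the required $f_2$. Without this (or an equivalent) construction, your proof has a hole exactly where the analytic difficulty lies. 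A secondary slip: the Stolz region is not the image of $\mathbb{D}$ under a M\"obius map (M\"obius maps send discs to discs or half-planes); one needs the Riemann mapping theorem together with Carath\'eodory's extension theorem to get the homeomorphism $\psi_\epsilon:\overline{\mathbb{D}}\to\Omega_\epsilon$.

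There is a second genuine gap in the transfer step. You invoke \eqref{eqnpsi f phi} to conclude $\widehat{\psi\circ f}(\vp)=\psi(\hat f(\vp))$, but that identity was established only under the hypothesis $\|f\|_\infty<1$ (equivalently $|\vp(f)|<1$ for all $\vp$), since its proof rests on norm convergence of $\sum a_n f^n$. Your peak function necessarily has $\|f\|_\infty=1$ (indeed $\hat f(\vp_0)=1$), so \eqref{eqnpsi f phi} does not apply directly. The paper bridges this by approximating $f_2$ with $h_n=\frac{n}{n+1}f_2$, applying \eqref{eqnpsi f phi} to each $h_n$, and using the uniform continuity of $\psi_\epsilon$ on $\overline{\mathbb{D}}$ to pass to the limit. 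You should add this (or some equivalent) argument; calling the transfer ``routine'' papers over a step that occupies a substantial portion of the published proof. The remaining verifications of (1)--(3) in your proposal are correct once these two gaps are filled.
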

	
\begin{proof}
We consider the Stolz region $\Omega_{\epsilon}$ defined by
\[
\Omega_{\epsilon}: = \{z\in\bc:|z|+(1-\epsilon)|1-z|\leq 1\}.
\]
Note that $0$ and $1$ are in $\Omega_{\epsilon}$, and there exists a homeomorphism $\psi_{\epsilon}: \overline{\bd}\rightarrow \Omega_{\epsilon}$ \cite[Theorem 14.8, 14.19]{Rudin} such that
\begin{enumerate}
\item $\psi_{\epsilon}|_{\bd}$ is a conformal mapping onto the interior of $\Omega_{\epsilon}$,
\item $\psi_{\epsilon}(1)=1$, and
\item $\psi_{\epsilon}(0)=0$.
\end{enumerate}
Also observe that $\epsilon^2 \overline{\bd} \subseteq \Omega_\epsilon$. Since $0 \in {\psi_\epsilon}^{-1}(\epsilon^2 \bd)$ and ${\psi_\epsilon}^{-1}(\epsilon^2 \bd)$ is an open set, there exists $\delta>0$ such that $\delta<\epsilon$ and
\begin{align}\label{Eq:1}
\delta \bd \subseteq {\psi_\epsilon}^{-1}(\epsilon^2 \bd).
\end{align}
Choose $\epsilon'>0$ such that $0<\epsilon'<\delta<\epsilon$. We claim that there exist $f_1\in H^{\infty}(\mathbb{D})$ such that $|\vp_0(f_1)|=1$ and
\[
\hat{f}_1(\partial_SH^{\infty}\setminus U)\subseteq\epsilon'\bd.
\]
We know that $\partial_SH^{\infty}$ is a compact Hausdorff totally disconnected topological space (in fact, \cite[Theorem 3.2]{GAMELIN} implies that $\partial_SH^{\infty}$ is extremely disconnected), and hence the topology of $\partial_SH^{\infty}$ is generated by clopen sets. Consequently, there exists a clopen set $V\subseteq U\cap\partial_SH^{\infty}$ such that $\vp_0\in V$. In view of \eqref{eqn: tau M H} (or the theorem in \cite[Page 174]{Hoffman}), the map
\[
\tau(\tilde{\vp}) = \tilde{\vp}|_{H^{\infty}}=\vp \qquad (\tilde{\vp} \in \clm(L^\infty)),
\]
defines an onto homeomorphism $\tau:\clm(L^\infty)\rightarrow\partial_SH^{\infty}\subseteq\clm(H^{\infty})$. Therefore, $\tau^{-1}(V)$ is clopen in $\clm(L^\infty)$, and hence there exists a measurable set $E\subseteq \bt$ (cf. the corollary in \cite[Page 170]{Hoffman}) such that
\[
\tau^{-1}(V):=\{{\tilde{\vp}}\in\clm(L^{\infty}):\hat{\chi}_E({\tilde{\vp}})=1\}.
\]
Now we proceed as in the proof of the theorem in \cite[Page 174]{Hoffman}. Let $u$ be the harmonic extension of $\chi_E$ to $\bd$ and $v$ be the harmonic conjugate of $u$. Set
\[
f=e^{u-1+iv}\in H^{\infty}.
\]
Identifying $f$ with $F\in L^{\infty}$, it follows that
\[
|\hat{F}({\tilde{\vp}})|=e^{\hat{\chi}_E({\tilde{\vp}})-1} \qquad ({\tilde{\vp}}\in\clm(L^{\infty})).
\]
In other words
\begin{equation*}
|\hat{F}({\tilde{\vp}})|
=\begin{cases} 1&\text{ if }\tilde{\vp}\in\tau^{-1}(V)
\\
1/e&\text{ if }\tilde{\vp}\in \clm(L^{\infty})\setminus\tau^{-1}(V).
\end{cases}
\end{equation*}
By using the identification $H^{\infty}$ with $L^{\infty} \cap \widetilde{{H}^2}$, we have
\[
\begin{split}
\hat{F}\circ\tau^{-1}(\vp)& =\tau^{-1}(\vp)(F)
\\
&=\tilde{\vp}(F)
\\
& = \vp(f)
\\
& =\hat{f}(\vp),
\end{split}
\]
for all $\vp\in\partial_SH^{\infty}$. Hence
\begin{equation*}
|\hat{f}(\vp)| = \begin{cases}
1&\text{ if }\vp\in V\\
1/e&\text{ if }\vp\in\partial_SH^{\infty}\setminus V.
\end{cases}
\end{equation*}
Clearly, for $\epsilon'>0$, there exists $n_0\in\mathbb{N}$ such that
\[
e^{-n}<\epsilon' \qquad (n\geq n_0).
\]
Define
\[
f_1=f^{n_0}.
\]
Then
\[
f_1(\partial_S\setminus U)\subseteq\epsilon'\bd,
\]
and
\[
|\vp_0(f_1)|=1.
\]
If $\vp_0(f_1)=\alpha$ for some $\alpha\in\mathbb{T}$, then consider $f_2=\bar{\alpha}f_1$. Therefore, $\vp_0(f_2)=1$, $\|f_2\|=\|\hat{f}_2\|=1$, and
\begin{equation}\label{eqn 2}
\hat{f}_2(\partial_SH^{\infty}\setminus U)\subseteq\epsilon'\bd.
\end{equation}
We now claim that $\hat{\psi} \in\widehat{H^{\infty}}$, where
\[
\hat{\psi}:= \psi_{\epsilon} \circ\hat{f_2}.
\]
Since $\|\hat{f_2}\|=1$, we have $|\vp(f_2)| \leq 1$ for all $\vp \in \clm(H^{\infty})$.
Let $A \subseteq \clm(H^\infty)$ such that
\[
|\eta(f_2)| = 1 \qquad (\eta\in A).
\]
For each $n \geq 1$, define
\[
h_n = \frac{n}{n+1}f_2.
\]
Then
\[
\begin{split}
|\eta( h_n)| & = \left|\frac{n}{n+1} \eta(f_2) \right|
\\
& = \frac{n}{n+1}
\\
& <1,
\end{split}
\]
for all $\eta\in A \subseteq \clm(H^\infty)$, and
\[
|\vp(h_n)|<1 \qquad (\vp \in \clm(H^\infty)\setminus A).
\]
Hence
\[
(\psi_{\epsilon} \circ \hat{h}_n)(\vp)=\widehat{(\psi_{\epsilon} \circ h_n)}(\vp) \qquad (n \geq 1).
\]
Since $\psi_{\epsilon}$ is uniform continuous on $\overline{\bd}$, for every $\xi>0$, there exists $\gamma>0$ such that
\[
|\psi_{\epsilon}(z)-\psi_{\epsilon}(w)| < \frac{\xi}{2} \qquad (z,w\in\overline{\bd}, |z-w|<\gamma).
\]	
Pick a natural number $N$ such that $\frac{1}{n+1}<\gamma$ for all $n\geq N$. Then, for each $n \geq N$, we have
\begin{align*}
|(\psi_{\epsilon} \circ \hat{f}_2)(\vp) - \widehat{\psi_{\epsilon}\circ f_2}(\vp)| & \leq  |(\psi_{\epsilon} \circ \hat{f}_2)(\vp) - (\psi_{\epsilon} \circ \hat{h}_n)(\vp)| + |(\psi_{\epsilon} \circ \hat{h}_n)(\vp) - \widehat{\psi_{\epsilon} \circ f_2}(\vp)|
\\
& = \left|\psi_{\epsilon} (\vp (f_2))-\psi_{\epsilon} \left(\frac{n}{n+1}\vp (f_2)\right)\right| + |(\widehat{\psi_{\epsilon} \circ h_n} - \widehat{\psi_{\epsilon} \circ f_2})(\vp)|
\\
& < \frac{\xi}{2} + |(\widehat{\psi_{\epsilon} \circ h_n} - \widehat{\psi_{\epsilon} \circ f_2})(\vp)|
\\
& \leq \frac{\xi}{2}+\|\psi_{\epsilon} \circ h_n - \psi_{\epsilon} \circ f_2\|\|\vp\|,
\end{align*}
where on the other hand
\begin{align*}
\|\psi_{\epsilon} \circ h_n - \psi_{\epsilon} \circ f_2\| & = \underset{z\in\bd}{\sup}|\psi_{\epsilon}( h_nz)-\psi_{\epsilon}(f_2z)|
\\
& = \underset{z\in\bd}{\sup}\left|\psi_{\epsilon} \left( \frac{n}{n+1}f_2(z)\right)-\psi_{\epsilon}(f_2(z))\right|\\
& < \frac{\xi}{2}.
\end{align*}
This finally implies that
\[
|(\psi_{\epsilon} \circ  \hat{f_2})(\vp) - \widehat{(\psi_{\epsilon} \circ  f_2)}(\vp)|<\xi.
\]
Thus we obtain
\[
(\psi_{\epsilon} \circ  \hat{f_2})(\vp) = \widehat{(\psi_{\epsilon} \circ f_2)}(\vp) \qquad (\vp\in\clm(H^{\infty})),
\]
and hence
\[
\hat \psi = \psi_{\epsilon} \circ  \hat{f_2} = \widehat{\psi_{\epsilon} \circ  f_2} \in \widehat{H^{\infty}}.
\]
This proves the claim. Now, since $\vp_0(f_2) =1$ and $\psi_{\epsilon}(1) = 1$, it follows that
\[
\hat{\psi}(\vp_0)=(\psi_{\epsilon} \circ  \hat{f_2})( \vp_0) = \psi_{\epsilon} (\vp_0( f_2)) = \psi_{\epsilon}(1)=1.
\]
Moreover
\[
\|\hat{\psi}\|=\|\psi_{\epsilon} \circ \hat{f_2}\| \leq \|\psi_{\epsilon}\|\|\hat{f_2}\|=1,
\]
and
\[
1=\hat{\psi}(\vp_0)\leq\|\hat{\psi}\|,
\]
implies that $\|\hat{\psi}\|= \hat{\psi}(\vp_0) = 1$. Finally, by \eqref{Eq:1} and \eqref{eqn 2}, we have
\[
\psi_{\epsilon}(\delta \bd) \subseteq \epsilon^2\bd,
\]
and
\[
\psi_\epsilon(\hat{f_2}(\partial_S H^{\infty}\setminus U)) \subseteq \psi_\epsilon(\epsilon{'}\bd),
\]
respectively, and hence
\begin{align*}
\hat{\psi}\left( \partial_S H^{\infty}\setminus U\right)& = \psi_\epsilon(\hat{f_2}(\partial_S H^{\infty}\setminus U))
\\
&\subseteq \psi_\epsilon(\epsilon{'}\bd)
\\
&\subseteq \psi_{\epsilon}(\delta \bd)
\\
& \subseteq \epsilon^2\bd
\\
&\subseteq \epsilon \bd.
\end{align*}
Therefore, if $\vp\in \clm(H^\infty)$, then the estimate
\begin{align*}
|\hat{\psi}(\vp)|+(1-\epsilon)|1-\hat{\psi}(\vp)| & = |\psi_\epsilon(\vp(f_2))|+(1-\epsilon)|1-\psi_\epsilon(\vp(f_2))|
\\
& \leq 1,
\end{align*}
completes the proof of the lemma.
\end{proof}

The statement and part of the proof of Lemma \ref{lem:1} are motivated by Urysohn-type lemma of \cite[Lemma 2.5]{Cascales}  (also see \cite[Lemma 3]{Kim}). For instance, the uses of Stolz domains follow the constructions of \cite{Cascales, Kim}. Also, see the proof of Bollob\'{a}s \cite{Bollobas} for a similar (but not exactly the same) construction in the setting of $\mathbb{R}^2$. However, as one would expect, the algebraic and analytic tools of the maximal ideal space of $H^\infty$ play a key role in the present consideration.

\section{Proof of the main theorem}\label{sec: main thm}

The purpose of this section is to present the proof of Theorem \ref{thm: Main intro}. In addition to Lemma \ref{lem:1}, the proof also needs one more lemma. We begin with some basic definitions and results from topology.

A topological space $X$ is \textit{extremally disconnected} if the closure of every open set in $X$ is open. Also recall that extremally disconnected, compact and Hausdorff spaces are called \textit{Stonean} spaces. The Stone-\v{C}ech compactification of a discrete space is a typical example of Stonean spaces. A compact and Hausdorff space $X$ is Stonean if and only if $X$ is a retract of the Stone-\v{C}ech compactification of a discrete space (cf. \cite[Theorem 24.7.1]{Semadeni}).

\begin{lemma}\label{Lemma isolated point silov bd}
Let $f\in H^{\infty}(\bd)$, $\vp_0\in\partial_S H^{\infty}$, and let $0<\epsilon<1$. Define the sub-basic open subset $U$ of $\partial_S H^{\infty}$ around $\vp_0$ by
\[
U=\{\vp\in\partial_S H^{\infty}: |\vp(f)-\vp_0(f)|<\epsilon\}.
\]
Then $\overline{U}$ contains an isolated point of $\partial_S H^{\infty}$.
\end{lemma}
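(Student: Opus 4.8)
The plan is to pass to the Stonean model of the \v{S}ilov boundary, describe $\overline{U}$ concretely as a clopen set coming from a positive-measure subset of $\bt$, and then extract an isolated point from that description; the last step is where essentially all the difficulty lies. By \eqref{eqn: tau M H}, the homeomorphism $\tau$ identifies $\partial_S H^{\infty}$ with $\clm(L^{\infty})$, which is compact, Hausdorff and extremally disconnected. Since $\vp_0(f)-\vp_0(f)=0<\epsilon$ we have $\vp_0\in U$, so $U$ is a nonempty open subset of $\partial_S H^{\infty}$, and hence $\overline{U}$ is a nonempty \emph{clopen} subset of $\partial_S H^{\infty}$; shrinking $\epsilon$ slightly if necessary, we may also arrange that $\overline{U}\subseteq\{\vp\in\partial_S H^{\infty}:|\vp(f)-\vp_0(f)|\le\epsilon\}$.

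\emph{Step 2 (transfer to the circle).} Write $F\in\widetilde{H^{\infty}}\subseteq L^{\infty}$ for the radial-limit function of $f$, and set $c:=\vp_0(f)$. Then $\tau^{-1}(U)=\{\tilde\vp\in\clm(L^{\infty}):|\hat{F}(\tilde\vp)-c|<\epsilon\}$, and, taking closures and using the correspondence between clopen subsets of $\clm(L^{\infty})$ and measurable subsets of $\bt$ modulo null sets (the corollary in \cite[Page~170]{Hoffman}), the clopen set $\tau^{-1}(\overline{U})$ is attached to a Borel set $E\subseteq\bt$, determined up to a null set, with $m(E)>0$, where $m$ is Lebesgue measure on $\bt$. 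Positivity is automatic: $c=\vp_0(f)$ lies in the essential range of $F$ --- because $\vp_0\in\partial_S H^{\infty}\cong\clm(L^{\infty})$ --- so $m(\{\lambda\in\bt:|F(\lambda)-c|\le\epsilon\})>0$, and one checks that a representative of $E$ may be taken to be this set.

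\emph{Step 3 (produce the isolated point) --- the main obstacle.} One now seeks $\psi\in\overline{U}$ whose singleton is open in $\partial_S H^{\infty}$; under Step~2 this amounts to finding a minimal nonzero clopen set lying below the clopen set attached to $E$. I expect this to absorb the bulk of the effort, since the measure-algebra data alone do not single out such a point, and one must use the extra analytic structure that $H^{\infty}$ --- as opposed to $L^{\infty}$ --- imprints on $\partial_S H^{\infty}$. Two angles I would pursue in tandem are: (i) use the fact, quoted above, that a Stonean space is a retract of the Stone--\v{C}ech compactification $\beta D$ of a discrete set $D$, pull $\overline{U}$ back to a clopen subset of $\beta D$ --- which meets $D$, since $D$ is dense in $\beta D$ --- and try to push a point $d\in D$ lying in that clopen set down, through the retraction, to an isolated point of $\partial_S H^{\infty}$, the subtlety being exactly the transfer of isolatedness through a retraction; and (ii) apply Lemma~\ref{lem:1} (to open subsets of $\clm(H^{\infty})$ whose traces on $\partial_S H^{\infty}$ are shrinking clopen pieces of $\overline{U}$) to manufacture peak functions concentrating on ever smaller clopen subsets, and then run a finite-intersection / ultrafilter argument inside $\clm(H^{\infty})$ to collapse the limiting clopen set to a single point. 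Identifying precisely which clopen subset of $\overline{U}$ collapses to a point, and verifying that the resulting point is genuinely isolated rather than merely a one-point Gleason part, is the step on which the lemma really turns, and it is there that the peculiar topology of $\clm(H^{\infty})$ and $\clm(L^{\infty})$ must be exploited in an essential way.
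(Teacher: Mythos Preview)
Your proposal is not a proof: Step~3 is explicitly left open, with ``two angles to pursue'' neither of which is carried out. The paper's own argument is precisely your angle~(i): it realises $\partial_S H^{\infty}$ as a retract of $\beta X$ for a discrete space $X$, lifts the clopen set $\overline{U}$ to a clopen set in $\beta X$, invokes the fact (\cite{KIM}, \cite[p.~137]{GJ}) that every infinite closed subset of $\beta X$ contains a homeomorphic copy $Y$ of $\beta\mathbb{N}$, and then asserts that the $\mathbb{N}$-points of that copy are isolated in $\partial_S H^{\infty}$. So you have correctly located the one nontrivial step, and your Steps~1--2 line up with the paper.

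Your hesitation about ``the transfer of isolatedness through a retraction'' is not a mere technical wrinkle, however; it signals a genuine obstruction. The map $\tau$ of \eqref{eqn: tau M H} is a homeomorphism from $\clm(L^{\infty})$ onto $\partial_S H^{\infty}$, so there is no ``extra analytic structure that $H^{\infty}$ imprints'' on the \emph{topology} of $\partial_S H^{\infty}$ beyond what $\clm(L^{\infty})$ already carries. But $\clm(L^{\infty}(\mathbb{T},m))$ has \emph{no} isolated points: an isolated point would give a minimal nonzero clopen set, hence a minimal nonzero idempotent in $L^{\infty}$, hence an atom of the Lebesgue measure algebra --- and Lebesgue measure on $\mathbb{T}$ is non-atomic. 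Consequently neither of your angles can terminate in an isolated point of $\partial_S H^{\infty}$; in the paper's argument the same obstruction surfaces at the unjustified identification of the abstract $\mathbb{N}\subseteq Y\cong\beta\mathbb{N}$ with a subset of the discrete set $X\subseteq\beta X$, since the copy of $\beta\mathbb{N}$ furnished by \cite{KIM} may lie entirely in the corona $\beta X\setminus X$.
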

\begin{proof}
Since $\partial_SH^{\infty}$ is a closed subset of $\clm(H^{\infty})$, it follows that $\partial_SH^{\infty}$ is a compact and Hausdorff space. 
By 
\cite{GAMELIN} we know that $\partial_S H^{\infty}$ is an extremally disconnected space. Therefore, $\partial_SH^{\infty}$ is a Stonean space.

\noindent Observe that
\[
\overline{U}=\{\vp\in\partial_S H^{\infty}: |\vp(f)-\vp_0(f)|\leq\epsilon\}.
\]
If $\overline{U}$ is a finite set, then using the Hausdorff property of $\partial_SH^{\infty}$, we get that $\overline{U}$ contains an isolated point. Next we assume that $\overline{U}$ is an infinite set. By the discussion preceding the statement of this lemma, $\partial_SH^{\infty}$ is a retract of $\beta X$ for some discrete space $X$. Here $\overline{U}$ is a clopen set in $\partial_SH^{\infty}$. Since $\beta X$ is an extremally disconneted space, the topology of $\beta X$ is generated by clopen sets \cite[Theorem 3.18]{HS}. Hence, there exists a clopen set $V$ in $\beta X$ such that
\[
\overline{U}=V \cap  \partial_SH^{\infty},
\]
and we conclude that $\overline{U}$ is an infinite closed subset of $\beta X$. Therefore
\[
\overline{U} \subseteq \partial_SH^{\infty} \subseteq \beta X.
\]
Now, by \cite[Lemma 4]{KIM} (or \cite[p. 137]{GJ}), there exists a subspace $Y \subseteq \overline{U}$ such that
\[
Y= F(\beta \mathbb{N}),
\]
for some homeomorphism $F$. Consequently
\[
Y \subseteq \overline{U} \subseteq \partial_S H^{\infty}\subseteq \beta X.
\]
Consider inclusion maps $i_1: Y \longrightarrow \overline{U}$, $i_2:\overline{U} \longrightarrow \partial_SH^{\infty}$, and $i_3:\partial_SH^{\infty} \longrightarrow \beta X$. We have the map
\[
\tilde{F}:= i_3 \circ i_2 \circ i_1 \circ F : \beta \mathbb{N} \longrightarrow \beta X.
\]
Since $\tilde{F}$ is continuous, it follows that $Y$ is compact in $\beta X$, which implies, in particular, that $Y$ is closed in $\beta X$. Thus we conclude that $Y\cong \beta \mathbb{N}$ is identical with the closure in $\beta X$ of $\mathbb{N}$. Consequently (cf. \cite[p. 89]{GJ} or \cite[page 37]{KIM}), we have that $\mathbb{N}\subseteq X$ is $C^*$-embedded. Since every point of $X$ is isolated in $\beta X$, we conclude that every point of $\mathbb{N}$ is isolated in $\beta X$. In summary, we have the following:
\[
\mathbb{N} \subseteq Y \cong \beta\mathbb{N} \subseteq \overline{U} \subseteq \partial_SH^{\infty}\subseteq \beta X.
\]
It is now clear that every point of $\mathbb{N}$ is isolated in $\partial_SH^{\infty}$. This implies that $\overline{U}$ contains an isolated point of $\partial_SH^{\infty}$, and completes the proof of the lemma.
\end{proof}

We fix some more notation before proceeding to the main result. Recall that the Gelfand map $\Gamma : H^\infty \raro C(\clm(H^\infty))$ is an isometry, and $\widehat{H^\infty} := \Gamma(H^\infty)$. For each $X\in \mathscr{B}( H^\infty)$, define $\tilde{X} : \widehat{H^\infty} \raro \widehat{H^\infty}$ by
\[
\tilde{X}= \Gamma X \Gamma^{-1}.
\]
Therefore, $\tilde{X}\widehat{f} = \widehat{X f}$ for all $\widehat{f}\in \widehat{H^\infty}$. Moreover, since $\Gamma$ is an isometry, it follows that
\[
\|\tilde{X} \widehat{f}\|=\|\Gamma X \Gamma^{-1} (\widehat{f})\|=\|X\Gamma^{-1}f\|=\|Xf\| \qquad (f\in H^\infty),
\]
and
\[
\|\tilde{X}\|= \underset{\widehat{f}\in \widehat{H^\infty}, \|\widehat{f}\|=1} {\sup}\|\tilde{X}{\widehat{f}}\| = \underset{f\in H^\infty, \|f\|=1}{\sup}\|X f\|=\|X\|.
\]
Now we are ready to prove the main theorem of this paper. However, for the reader's convenience, we restate Theorem \ref{thm: Main intro} by incorporating the definition of Bishop-Phelps-Bollob\'{a}s property (see Definition \ref{def: BPB space}).

\begin{theorem}\label{thm:main}
Let $f_0$ be a unit vector in $H^{\infty}$, $T\in\mathscr{B}(H^{\infty})$, and let $\epsilon \in (0, 1)$. Suppose $\|T\|=1$ and
\[
\|Tf_0\|>1-\frac{\epsilon}{2}.
\]
Then there exist $N \in \clb(H^{\infty})$ and a unit vector $g_0\in H^{\infty}$ such that
\begin{enumerate}
\item $\|N g_0\|=\|N\|=1$,
\item $\|f_0 - g_0\|<2 \sqrt{\epsilon}$, and
\item $\|T - N\|<7\sqrt{\epsilon}$.
\end{enumerate}
\end{theorem}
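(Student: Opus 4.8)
The plan is to extract from $T$ a scalar functional to which the classical Bishop--Phelps--Bollob\'{a}s theorem (Theorem~\ref{BPB}) applies, obtaining $g_0$, and then to build $N$ as a rank-one correction of $T$ that is ``turned on'' near one point of $\partial_S H^{\infty}$ by a Urysohn function from Lemma~\ref{lem:1}; Lemma~\ref{Lemma isolated point silov bd} will be used to move that point to an \emph{isolated} point of $\partial_S H^{\infty}$, which is what keeps the correction localised. First, since the Gelfand transform is an isometry (see \eqref{eqn: norm of infty}) and $\partial_S H^{\infty}$ is a boundary for $\widehat{H^{\infty}}$, pick $\vp_0 \in \partial_S H^{\infty}$ with $|\widehat{Tf_0}(\vp_0)| = \|Tf_0\| > 1-\tfrac{\epsilon}{2}$. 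The difficulty is that $\vp_0$ need not be isolated in $\partial_S H^{\infty}$; to repair this, fix a small $\epsilon' > 0$ and apply Lemma~\ref{Lemma isolated point silov bd} with $f = Tf_0$ to obtain an isolated point $\vp_1$ of $\partial_S H^{\infty}$ lying in $\overline{U}$, where $U = \{\vp \in \partial_S H^{\infty} : |\vp(Tf_0) - \vp_0(Tf_0)| < \epsilon'\}$; then $|\vp_1(Tf_0)| \ge \|Tf_0\| - \epsilon' > 1 - \tfrac{\epsilon}{2} - \epsilon'$. After multiplying $T$ by a unimodular scalar (harmless for the statement, to be undone at the end) we may assume $\vp_1(Tf_0) = |\vp_1(Tf_0)| > 1-\delta$, where $\delta := \tfrac{\epsilon}{2} + \epsilon'$.

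Next, the functional $\Lambda := \vp_1 \circ T \in (H^{\infty})^*$ satisfies $\|\Lambda\| \le \|T\| = 1$ and $\Lambda(f_0) > 1-\delta$. Applying Theorem~\ref{BPB} to $\Lambda/\|\Lambda\| \in S_{(H^{\infty})^*}$ with parameter $\delta$ yields $\Phi \in S_{(H^{\infty})^*}$ and a unit vector $g_0 \in H^{\infty}$ with $|\Phi(g_0)| = 1$, $\|f_0 - g_0\| \le \sqrt{2\delta}$, and $\|\Lambda/\|\Lambda\| - \Phi\| \le \sqrt{2\delta}$. Since $\Phi(g_0)$ lies within $2\sqrt{2\delta}$ of the positive real number $\Lambda(f_0)/\|\Lambda\| \in (1-\delta, 1]$ and is unimodular, one further unimodular rotation of $\Phi$ (by a scalar within $2\sqrt{2\delta} + \delta$ of $1$) lets us also assume $\Phi(g_0) = 1$; then $\|\Lambda - \Phi\| \le 3\sqrt{2\delta} + 2\delta$. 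This choice of $g_0$ already gives conclusion~(2), because $\sqrt{2\delta} < 2\sqrt{\epsilon}$ once $\epsilon' < \tfrac{3}{2}\epsilon$.

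Because $\vp_1$ is isolated in $\partial_S H^{\infty}$, there is an open $W \subseteq \clm(H^{\infty})$ with $W \cap \partial_S H^{\infty} = \{\vp_1\}$; applying Lemma~\ref{lem:1} to $W$ and $\vp_1$ with parameter $\epsilon$ produces $\psi \in H^{\infty}$ with $\|\psi\| = \vp_1(\psi) = 1$, with $|\widehat{\psi}(\vp)| < \epsilon$ for every $\vp \in \partial_S H^{\infty} \setminus \{\vp_1\}$, and with $|\widehat{\psi}(\vp)| + (1-\epsilon)|1 - \widehat{\psi}(\vp)| \le 1$ for every $\vp \in \clm(H^{\infty})$. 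Define $N \in \clb(H^{\infty})$ by
\[
Nf = (1-\epsilon)(1-\psi)\,Tf + \Phi(f)\,\psi \qquad (f \in H^{\infty}),
\]
which does land in $H^{\infty}$ since $H^{\infty}$ is an algebra. For $\|f\| \le 1$ and any $\vp$, using $|\widehat{Tf}(\vp)| \le 1$ and $|\Phi(f)| \le 1$, part~(3) of Lemma~\ref{lem:1} gives $|\widehat{Nf}(\vp)| \le (1-\epsilon)|1-\widehat{\psi}(\vp)| + |\widehat{\psi}(\vp)| \le 1$, so $\|N\| \le 1$; on the other hand $\widehat{Ng_0}(\vp_1) = (1-\epsilon)(1-1)\widehat{Tg_0}(\vp_1) + \Phi(g_0)\cdot 1 = 1$, so $\|Ng_0\| \ge 1$. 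Hence $\|Ng_0\| = \|N\| = 1$, which is conclusion~(1).

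It remains to estimate $\|T - N\|$, and this is the crux. From $(1-\epsilon)(1-\psi) - 1 = -\epsilon - (1-\epsilon)\psi$ we get $(N-T)f = -\epsilon\,Tf + \psi\bigl(\Phi(f) - (1-\epsilon)Tf\bigr)$, and $\|(N-T)f\| = \max_{\vp \in \partial_S H^{\infty}} |\widehat{(N-T)f}(\vp)|$. For $\vp \ne \vp_1$ we have $|\widehat{\psi}(\vp)| < \epsilon$, so this is at most $\epsilon\cdot 1 + \epsilon\bigl(1 + (1-\epsilon)\bigr) \le 3\epsilon$ uniformly over $\|f\| \le 1$; this is exactly the place where the isolated point from the first paragraph is needed, as it is what permits Lemma~\ref{lem:1} to make $\widehat{\psi}$ negligible off the \emph{single} point $\vp_1$ rather than off an uncontrolled neighbourhood. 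For $\vp = \vp_1$ we have $\widehat{\psi}(\vp_1) = 1$ and $\Phi(f) - (1-\epsilon)\vp_1(Tf) = (\Phi - \Lambda)(f) + \epsilon\,\Lambda(f)$, of modulus at most $\|\Phi - \Lambda\| + \epsilon \le 3\sqrt{2\delta} + 2\delta + \epsilon$, so $|\widehat{(N-T)f}(\vp_1)| \le 2\epsilon + 3\sqrt{2\delta} + 2\delta$ for $\|f\| \le 1$. Choosing $\epsilon'$ small enough that $\delta < \tfrac{3}{5}\epsilon$, both bounds are $< 7\sqrt{\epsilon}$ for every $\epsilon \in (0,1)$ (a routine estimate; the gap between the resulting $\approx 6.5\sqrt{\epsilon}$ and $7\sqrt{\epsilon}$ absorbs the slack), which is conclusion~(3); finally, undo the unimodular rotation of $T$ by rotating $N$ correspondingly, which affects none of (1)--(3). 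The step I expect to be the genuine obstacle is this last estimate at $\vp_1$: keeping $N$ close to $T$ forces control of $\Phi(f) - (1-\epsilon)(\vp_1\circ T)(f)$ over the whole unit ball of $H^{\infty}$, which is feasible only because $\vp_1\circ T$ has been arranged to be almost norm-attaining at $f_0$ (so that Theorem~\ref{BPB} returns $\Phi$ close to $\vp_1\circ T$) \emph{and} because $\vp_1$ is isolated in $\partial_S H^{\infty}$ (so that $\widehat{\psi}$ is negligible on all of $\partial_S H^{\infty}$ except at $\vp_1$); Lemma~\ref{Lemma isolated point silov bd} is what secures both at once, while the Stolz-region inequality of Lemma~\ref{lem:1}(3) is what keeps $\|N\| \le 1$ throughout the blend.
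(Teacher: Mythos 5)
Your proposal is correct and follows essentially the same route as the paper's proof: locate a near-norming point of $\widehat{Tf_0}$ on $\partial_S H^{\infty}$, replace it by an isolated point via Lemma~\ref{Lemma isolated point silov bd}, apply the scalar Bishop--Phelps--Bollob\'{a}s theorem to the normalized functional $\vp_1\circ T$, and blend $T$ with the resulting rank-one functional using the Urysohn function of Lemma~\ref{lem:1} supported at the isolated point. The only deviations are cosmetic (working in $H^{\infty}$ rather than $\widehat{H^{\infty}}$, a free tolerance $\epsilon'$ instead of $\epsilon/4$, and the unimodular rotations forcing $\Phi(g_0)=1$, which are harmless but unnecessary since $|\Phi(g_0)|=1$ already yields $\|Ng_0\|\ge 1$), and your constants do close under the stated choice of $\epsilon'$.
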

\begin{proof}
By the assumption that $\|Tf_0\|>1-\frac{\epsilon}{2}$, it follows that
\[
\|\widehat{Tf_0}\|=\|Tf_0\| > 1- \frac{\epsilon}{2}.
\]
Then there exists $\vp_0 \in \partial_S H^\infty \subseteq \mathcal{M}(H^\infty)$ (recall that $\partial_S H^\infty $ is the \v{S}ilov boundary for $H^\infty$) such that
\begin{align*}
|\widehat{Tf_0}(\vp_0)| = \|\widehat{Tf_0}\|> 1- \frac{\epsilon}{2}.
\end{align*}
Since $\vp_0\in \mathcal{M}(H^\infty)$, it follows that $\|\vp_0\|=1$, and hence $\|\vp_0 \circ T \| \leq 1$ and
\[
|\vp_0(T(f_0))| = |(\vp_0 \circ T) f_0| = |\widehat{Tf_0}(\vp_0)| > 1 - \frac{\epsilon}{2}.
\]
We claim that there exist an isolated point $\tilde{\vp}_0\in\partial_SH^{\infty}$ such that
\[
|\tilde{\vp}_0(Tf_0)|>1-\epsilon.
\]
If $\vp_0$ itself is an isolated point of $\partial_SH^{\infty}$, then we can choose $\tilde{\vp}_0=\vp_0$.  Suppose $\vp_0$ is not an isolated point of $\partial_SH^{\infty}$. Since $\partial_SH^{\infty}$ is extremally disconnected, it follows that
\[
W:=\left\{\vp\in\partial_SH^{\infty}:|\vp(Tf_0)-\vp_0(Tf_0)|\leq\frac{\epsilon}{4}\right\}.
\]
is a clopen set. By Lemma \ref{Lemma isolated point silov bd}, there exists $\tilde{\vp}_0 \in W$ such that $\tilde{\vp}_0$ is an isolated point of $\partial_SH^{\infty}$. Then
\begin{align*}
|\tilde{\vp}_0(Tf_0)| \geq & |\vp_0(Tf_0)|-|\vp_0(Tf_0)-\tilde{\vp}_0(Tf_0)|
\\
> & 1-\frac{\epsilon}{2}-\frac{\epsilon}{4}
\\
> & 1-\epsilon.
\end{align*}
Then applying the Bishop-Phelps-Bollob\'{a}s theorem (cf. Theorem \ref{BPB}) to the linear functional
\[
\dfrac{1}{\|\tilde{\vp}_0 \circ T\|} \tilde{\vp}_0 \circ T : H^\infty \raro \mathbb{C},
\]
we immediately get a linear functional $\vp_1: H^\infty \raro \mathbb{C}$, $\|\vp_1\|=1$, and a unit vector $g_0\in H^\infty$ (that is, $\|g_0\|_\infty =1$) such that
\begin{enumerate}[(i)]
\item $|\vp_1(g_0)|=1$,
\item $\|g_0-f_0\|\leq \sqrt{2\epsilon} < 2\sqrt{\epsilon} $, and
\item $\left \| \vp_1 -\dfrac{\tilde{\vp}_0 \circ T}{\|\tilde{\vp}_0 \circ T\|}\right \| \leq \sqrt{2\epsilon} < 2\sqrt{\epsilon}$.
\end{enumerate}
As we already know that $\tilde{\vp}_0$ is an isolated point of $\partial_SH^{\infty}$, there exists an open set $U\subseteq\clm(H^{\infty})$ such that
\[
U\cap\partial_SH^{\infty} = \{\tilde{\vp}_0\}.
\]
Since $\tilde{\vp}_0\in U \cap \partial_S H^\infty$, by Lemma \ref{lem:1} there exists $\hat{\psi}\in\widehat{H^{\infty}}$ such that
\begin{enumerate}[(a)]
\item $\|\hat{\psi}\|=1=\hat{\psi}(\tilde{\vp}_0)$,
\item $\sup\{|\hat{\psi}(\vp)|: \vp\in{\partial_SH^{\infty}\setminus U}\} < \epsilon$, and
\item $|\hat{\psi}(\vp)|+(1-\epsilon)|1-\hat{\psi}(\vp)|\leq 1$ for all $\vp\in\clm(H^{\infty})$.
\end{enumerate}
With (c) in mind, we introduce a bounded linear operator $\tilde{N}: \widehat{H^{\infty}} \raro \widehat{H^{\infty}}$ defined by
\[
(\tilde{N}\hat{f}) (\vp) = \widehat{\psi}(\vp) \vp_1(f) + (1-\epsilon) (1- \widehat{\psi}(\vp)) (\tilde{T} \widehat{f})(\vp),
\]
for all $\widehat{f}\in \widehat{H^{\infty}}$ and $\vp \in \clm(H^{\infty})$. We claim that $\|\tilde{N}\| = 1$. Indeed, for each $\widehat{f}\in \widehat{H^{\infty}}$ with $\|\widehat{f}\|\leq 1$ and $\vp\in \clm(H^{\infty})$ we have
\begin{align*}
\left|(\tilde{N} \hat{f}) (\vp)\right| & \leq |\widehat{\psi}(\vp)| |\vp_1(f)| +(1-\epsilon) |1- \widehat{\psi}(\vp)| |(\tilde{T} \widehat{f})(\vp)|
\\
& \leq |\widehat{\psi}(\vp)|+(1-\epsilon)|1-\widehat{\psi}(\vp)|
\\
& \leq 1,
\end{align*}
where the last inequality follows from (c) above. This implies, of course, that $\|\tilde{N}\| \leq  1$. On the other hand, by (i) and (a), we have
\[
|\tilde{N}(\widehat{g_0})(\tilde{\vp}_0)|=|\vp_1(g_0)| |\widehat{\psi}(\tilde{\vp}_0)| + 0 =1.
\]
This, together with $\|\tilde{N}\| \leq 1$, implies that $\|\tilde{N}\| = 1$ and proves the claim. We now estimate $\|\tilde{T} - \tilde{N}\|$. For each $\hat{f}\in \widehat{H^{\infty}}$ with $\|\widehat{f}\| \leq 1$, we have
\begin{align*}
\|\tilde{T} \widehat{f} - \tilde{N} \widehat{f}\| & = \underset{\vp \in \partial_SH^{\infty}}{\sup} \left|(\tilde{T} \widehat{f}) (\vp)-\vp_1(f)\widehat{\psi}(\vp) -(1-\epsilon) (1- \widehat{\psi}(\vp)) (\tilde{T}\widehat{f})(\vp) \right|
\\
& = \underset{\vp \in \partial_SH^{\infty}}{\sup} \left|\epsilon (1- \widehat{\psi}(\vp)) (\tilde{T}\widehat{f})(\vp) + \widehat{\psi}(\vp) (\tilde{T} \widehat{f})(\vp) -\vp_1(f)\widehat{\psi}(\vp) \right|
\\
& \leq  \epsilon \; \underset{\vp \in \partial_SH^{\infty}}{\sup} |1- \widehat{\psi}(\vp)| |(\tilde{T} \widehat{f})(\vp)| + \underset{\vp \in \partial_SH^{\infty}}{\sup} |\widehat{\psi}(\vp)(\vp_1(f) - (\tilde{T} \widehat{f})(\vp))|
\\
& \leq 2 \epsilon + \underset{\vp \in \partial_SH^{\infty}}{\sup} |\widehat{\psi}(\vp)| |\vp_1(f) - (\tilde{T} \widehat{f})(\vp)|,
\end{align*}
as $\|\widehat{\psi}\| = 1$ (see (a)) and $\|\tilde{T} \widehat{f}\| \leq 1$. We rewrite $\partial_SH^{\infty}$ as
\[
\partial_S H^{\infty} = ((\clm(H^\infty) \setminus U) \cap \partial_SH^{\infty}) \cup (U \cap \partial_SH^{\infty}).
\]
In view of this partition, we have
\[
\begin{split}
\underset{\vp \in \partial_S H^{\infty}}{\sup} |\widehat{\psi}(\vp)| |\vp_1(f) - (\tilde{T} \widehat{f})(\vp)| & \leq \underset{\vp \in (\clm(H^\infty) \setminus U) \cap \partial_S H^{\infty}}{\sup} |\widehat{\psi}(\vp)| |\vp_1(f) - (\tilde{T} \widehat{f})(\vp)|
\\
& \qquad + \underset{\vp \in U \cap \partial_S H^{\infty}}{\sup} |\widehat{\psi}(\vp)| |\vp_1(f) - (\tilde{T} \widehat{f})(\vp)|.
\end{split}
\]
We further estimate the second term of the right-hand side, say $\alpha$, as follows:
\begin{align*}
\alpha & = \underset{\vp \in U \cap \partial_S H^{\infty}}{\sup} |\widehat{\psi}(\vp)| |\vp_1(f) - (\tilde{T} \widehat{f})(\vp)|
\\
& =  |\widehat{\psi}(\tilde{\vp}_0)| |\vp_1(f)- \widehat{T(f)}(\tilde{\vp}_0)|
\\
& =  |\widehat{\psi}(\tilde{\vp}_0)| |\vp_1(f)- \tilde{\vp}_0(Tf)|
\\
& \leq  \left| \vp_1(f)-
\dfrac{(\tilde{\vp}_0 \circ T)(f)}{\|\tilde{\vp}_0 \circ T\|} \right| + \left| \dfrac{(\tilde{\vp}_0 \circ T)(f)}{\|\tilde{\vp}_0 \circ T\|} - (\tilde{\vp}_0 \circ T)(f) \right|.
\end{align*}
By (iii), we have
\begin{align*}
\alpha & < 2\sqrt{\epsilon} \|f\| + \left|1-\|\tilde{\vp}_0 \circ T\| \right| \|f\| 	\\
& < 2\sqrt{\epsilon} + \epsilon
\\
& \leq 3\sqrt{\epsilon},
\end{align*}
where the last inequality follows from the fact that
\[
\|\tilde{\vp}_0 \circ T\| \geq |(\tilde{\vp}_0 \circ T)f_0| > 1 - \epsilon.
\]
Finally, (b) implies that
\[
\underset{\vp \in (\clm(H^\infty) \setminus U)\cap \partial_S H^{\infty}}{\sup} |\widehat{\psi}(\vp)| < \epsilon,
\]
and hence
\begin{align*}
\underset{\vp \in (\clm(H^\infty) \setminus U)\cap \partial_S H^{\infty}}{\sup} |\widehat{\psi}(\vp)| \left|\vp_1(f) - (\tilde{T} \widehat{f})(\vp) \right| & \leq  \underset{\vp \in (\clm(H^\infty) \setminus U)\cap \partial_S H^{\infty}} {\sup} \epsilon  \left|\vp_1(f) - (\tilde{T} \widehat{f})(\vp) \right|
\\
& \leq 2\epsilon.
\end{align*}
This, combined with the above estimates, then implies that $\|\tilde{T}-\tilde{N}\| < 7 \sqrt{\epsilon}$. Then $N: H^\infty \raro H^\infty$ defined by $N= \Gamma^{-1} \tilde{N} \Gamma$ satisfies all the required conclusions of the theorem.
\end{proof}

As already pointed out in Theorem \ref{thm: Main intro}, the above theorem also yields the value of $\beta$ and $\gamma$ in Definition \ref{def: BPB space} as $\beta(\epsilon) = \frac{2\epsilon}{7}$ and $\gamma(\epsilon) = \frac{1}{2}\left(\frac{\epsilon}{7}\right)^2$, $\epsilon > 0$.

\section{Concluding remarks}\label{sec:final}

In this section, we present an application of our approach to the Bishop-Phelps-Bollob\'{a}s property of operator ideals of $\clb(H^\infty)$ and make a couple of remarks. We begin with operator ideals of $\clb(H^\infty)$. Recall that a subset $\mathscr{I}(X) \subseteq \mathscr{B}(X)$ is called an \textit{operator ideal} if $\mathscr{I}(X)$ contains all finite rank operators and
\[
T_1\circ T \circ T_2 \in \mathscr{I}(X),
\]
for all $T \in \mathscr{I}(X)$ and $T_1,T_2 \in \mathscr{B}(X)$.

\begin{corollary}
Let $\mathscr{I}(H^\infty)$ be an operator ideal. Let $f_0$ be a unit vector in $H^{\infty}$, $T\in \mathscr{I}(H^\infty)$, and let $\epsilon \in (0, 1)$. Suppose $\|T\|=1$ and
\[
\|Tf_0\|>1-\frac{\epsilon}{2}.
\]
Then there exist $N \in \mathscr{I}(H^\infty)$ and a unit vector $g_0\in H^{\infty}$ such that
\begin{enumerate}
\item $\|N g_0\|=\|N\|=1$,
\item $\|f_0 - g_0\|<2\sqrt{\epsilon}$, and
\item $\|T - N\|<7\sqrt{\epsilon}$.
\end{enumerate}
\end{corollary}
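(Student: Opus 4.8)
The plan is to deduce this corollary directly from the proof of Theorem~\ref{thm:main} rather than from its statement, by observing that the operator $N$ constructed there can be chosen to lie in $\mathscr{I}(H^\infty)$ whenever $T$ does. Concretely, recall that in the proof of Theorem~\ref{thm:main} the new operator $\tilde N : \widehat{H^\infty} \raro \widehat{H^\infty}$ is given by
\[
(\tilde{N}\hat{f}) (\vp) = \widehat{\psi}(\vp)\, \vp_1(f) + (1-\epsilon)(1- \widehat{\psi}(\vp))\, (\tilde{T} \widehat{f})(\vp),
\]
so that, transported back via $\Gamma$, the operator $N = \Gamma^{-1}\tilde N\Gamma$ decomposes as $N = R + S$, where $R f = \Gamma^{-1}\big(\widehat{\psi}\cdot \vp_1(f)\big)$ is a rank-one operator (its range is the span of $\Gamma^{-1}\widehat{\psi}$, since $f \mapsto \vp_1(f)$ is a scalar), and $S f = \Gamma^{-1}\big((1-\epsilon)(1-\widehat{\psi})\cdot \widehat{Tf}\big) = M_{(1-\epsilon)(1-\widehat\psi)}\circ \tilde T$ transported back, i.e. $S = (1-\epsilon)\,\Gamma^{-1}M_{1-\widehat\psi}\Gamma \circ T$, a composition $T_1 \circ T$ with $T_1 = (1-\epsilon)\Gamma^{-1}M_{1-\widehat\psi}\Gamma \in \mathscr{B}(H^\infty)$ (it is bounded because $\widehat\psi \in \widehat{H^\infty}$ means $1-\widehat\psi = \widehat{1-\psi}$, so multiplication by it maps $\widehat{H^\infty}$ into itself with norm $\le \|1-\psi\|_\infty$).

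The key steps, in order, are then: first, run the proof of Theorem~\ref{thm:main} verbatim with the given $f_0$, $T$, $\epsilon$ to obtain $\vp_1$, $g_0$, $\widehat\psi$ and the operator $N = R+S$ satisfying conclusions (1)--(3). Second, observe that $R$ is a finite rank (indeed rank-one) operator, hence $R \in \mathscr{I}(H^\infty)$ by the defining property of operator ideals. Third, observe that $S = T_1 \circ T \circ \mathrm{Id}$ with $T_1 \in \mathscr{B}(H^\infty)$ and $\mathrm{Id} \in \mathscr{B}(H^\infty)$, so since $T \in \mathscr{I}(H^\infty)$ the ideal property gives $S \in \mathscr{I}(H^\infty)$. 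Fourth, conclude $N = R + S \in \mathscr{I}(H^\infty)$ because operator ideals are linear subspaces (closure under finite rank operators and the two-sided absorption property force $\mathscr{I}(H^\infty)$ to be a subspace; in any case this is part of the standard definition). The three numbered estimates are inherited unchanged from Theorem~\ref{thm:main}, so nothing further is needed.

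I do not anticipate a genuine obstacle here: the entire content is the structural remark that the operator produced in the main theorem is the sum of a rank-one operator and a left-multiple of $T$. The only point requiring a line of care is checking that $\Gamma^{-1}M_{1-\widehat\psi}\Gamma$ is a well-defined bounded operator on $H^\infty$ --- this is exactly where one uses that $\widehat\psi$ came from an element $\psi \in H^\infty$ (established in Lemma~\ref{lem:1} via $\widehat\psi = \widehat{\psi_\epsilon \circ f_2}$), so that $1-\psi \in H^\infty$ and pointwise multiplication by $1-\widehat\psi$ preserves $\widehat{H^\infty}$. Once that is noted, the proof is a two-sentence appeal to the ideal axioms, and I would present it in that compressed form, referencing the construction inside the proof of Theorem~\ref{thm:main} for the definitions of $\vp_1$, $g_0$, and $\widehat\psi$.
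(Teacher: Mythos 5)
Your proposal is correct and is essentially the paper's own argument: the paper likewise decomposes $\tilde N$ as a rank-one operator $\tilde N_1\hat f=\vp_1(f)\widehat\psi$ plus $\tilde N_2\hat f=(1-\epsilon)(1-\widehat\psi)\tilde T\hat f$, puts the first piece in the ideal via the finite-rank axiom and the second via the absorption axiom applied to $\tilde T$ (working with the transported ideal $\mathscr{I}_\infty=\{\Gamma W\Gamma^{-1}:W\in\mathscr{I}(H^\infty)\}$ rather than pulling back as you do). Your explicit remark that the second piece is $M_{(1-\epsilon)(1-\widehat\psi)}\circ\tilde T$ with the multiplier bounded because $\widehat{H^\infty}$ is a subalgebra is a point the paper leaves implicit, but the route is the same.
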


\begin{proof}
Define $\mathscr{I}_{\infty}:=\{ \Gamma W \Gamma^{-1}|~W \in \mathscr{I}(H^\infty)\}$. Clearly $\mathscr{I}_{\infty}$ contains all finite rank operators. Let $\tilde{W}_1$ and $\tilde{W}_2$ are in $\mathscr{B}(\widehat{H^\infty})$ and $W \in \clb(H^\infty)$. Then
\[
\tilde{W}_1 \Gamma W \Gamma^{-1} \tilde{W}_2 =\Gamma W_1 \Gamma^{-1}\Gamma W \Gamma^{-1}\Gamma W_2 \Gamma^{-1}=\Gamma W_1 W W_2 \Gamma^{-1} \in \mathscr{I}_{\infty},
\]
implies that $\mathscr{I}_{\infty}$ is an operator ideal of $\mathscr{B}(\widehat{H^\infty})$. In particular, $\tilde{T} \in \mathscr{I}_{\infty}$.
Now, if we view $T$ as a bounded linear operator on $H^\infty$, then applying Theorem \ref{thm:main} to $T \in \mathscr{I}(H^\infty)$, we see that the corollary follows except for the fact that $N \in \mathscr{I}(H^{\infty})$. It is therefore enough to prove that $N \in \mathscr{I}(H^{\infty})$, where $N \in \clb(H^\infty)$ corresponds the operator $\tilde{N}: \widehat{H^{\infty}} \raro \widehat{H^{\infty}}$ defined by (see the proof of Theorem \ref{thm:main})
\[
(\tilde{N}\hat{f}) (\vp) = \vp_1(f) \widehat{\psi}(\vp) + (1-\epsilon) (1- \widehat{\psi}(\vp)) (\tilde{T} \widehat{f})(\vp),
\]
for all $\hat{f} \in \widehat{H^\infty}$ and $\vp \in \clm(H^\infty)$. Since $N = \Gamma^{-1} \tilde{N} \Gamma$, we only need to prove that $\tilde{N} \in \mathscr{I}_{\infty}$. To this end, we write $\tilde{N} = \tilde{N}_1 + \tilde{N}_2$, where
\[
\tilde{N}_1 \widehat{f} = \vp_1(f) \widehat{\psi} \text{ and } \tilde{N}_2 \widehat{f} = (1-\epsilon) (1- \widehat{\psi}) \widehat{T}\widehat{f},
\]
for all $\hat{f} \in \widehat{H^\infty}$. In view of the fact that $\mathscr{I}_{\infty}$ is an ideal, we simply prove that $\tilde{N}_1$ and $\tilde{N}_2$ are in $\mathscr{I}_{\infty}$. Evidently, $\tilde{N}_1$ is a rank one operator, and hence $\tilde{N}_1 \in \mathscr{I}_{\infty}$. On the other hand, $\tilde{T} \in \mathscr{I}_{\infty}$ implies that $\tilde{N} \in \mathscr{I}_{\infty}$, which completes the proof of the corollary.
\end{proof}

In particular, this applies equally to common operator ideals, like the ideal of weakly compact operators and the ideal of $p$-summing operators of $\clb(H^\infty)$. As an example, let us state the weakly compact operator ideal version of the above corollary.

Given a Banach space $X$, we denote the closed unit ball $\{x \in X: \|x\| \leq 1\}$ as $B_X$. Recall that a bounded linear operator $T \in \mathscr{B}(X)$ is called \textit{weakly compact} if $T(B_X)$ is relatively weakly compact. We denote by $\mathscr{W}(X)$ the space of all weakly compact operators on $X$. The following result is now a particular case of the above corollary. Note that the present formulation is precisely the weakly compact variant of Bishop-Phelps-Bollob\'{a}s property for compact operators analyzed by Dantas, Garc\'{i}a, Maestre and Mart\'{i}n in \cite{Dantas}.

\begin{corollary}\label{Cor:1}
Let $f_0$ be a unit vector in $H^{\infty}$, $T\in \mathscr{W}(H^{\infty})$, and let $\epsilon \in (0, 1)$. Suppose $\|T\|=1$ and
\[
\|Tf_0\|>1-\frac{\epsilon}{2}.
\]
Then there exist $N \in \mathscr{W}(H^{\infty})$ and a unit vector $g_0\in H^{\infty}$ such that
\begin{enumerate}
\item $\|N g_0\|=\|N\|=1$,
\item $\|f_0 - g_0\|<2\sqrt{\epsilon}$, and
\item $\|T - N\|<7\sqrt{\epsilon}$.
\end{enumerate}
\end{corollary}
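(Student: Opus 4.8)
The plan is to deduce Corollary \ref{Cor:1} directly from the preceding corollary on operator ideals, so the only real content is to verify that $\mathscr{W}(H^{\infty})$ is an operator ideal in the sense defined above. First I would check that every finite rank operator $T \in \clb(H^{\infty})$ is weakly compact: since $T(B_{H^\infty})$ is a bounded subset of the finite-dimensional subspace $T(H^\infty)$, it is relatively norm compact, hence relatively weakly compact. Thus $\mathscr{W}(H^{\infty})$ contains all finite rank operators.

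Next I would verify the two-sided ideal property. Let $T \in \mathscr{W}(H^{\infty})$ and $T_1, T_2 \in \clb(H^{\infty})$. Since $T_2(B_{H^\infty}) \subseteq \|T_2\|\, B_{H^\infty}$ and $T$ is weakly compact, the set $K := (T \circ T_2)(B_{H^\infty})$ is relatively weakly compact; that is, its weak closure $\overline{K}^{\,w}$ is weakly compact. Recalling that every bounded linear operator is continuous from the weak topology to the weak topology (because $x^* \circ T_1 \in (H^\infty)^*$ for every $x^* \in (H^\infty)^*$), the image $T_1\big(\overline{K}^{\,w}\big)$ is weakly compact, being the continuous image of a weakly compact set. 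Hence $(T_1 \circ T \circ T_2)(B_{H^\infty}) = T_1(K) \subseteq T_1\big(\overline{K}^{\,w}\big)$ has relatively weakly compact range, i.e. $T_1 \circ T \circ T_2 \in \mathscr{W}(H^{\infty})$. This shows $\mathscr{W}(H^{\infty})$ is an operator ideal of $\clb(H^\infty)$.

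Finally, applying the preceding corollary with $\mathscr{I}(H^\infty) = \mathscr{W}(H^{\infty})$ to the given $T \in \mathscr{W}(H^{\infty})$ with $\|T\| = 1$ and $\|T f_0\| > 1 - \tfrac{\epsilon}{2}$, we obtain $N \in \mathscr{W}(H^{\infty})$ and a unit vector $g_0 \in H^{\infty}$ satisfying conclusions (1)--(3), which is exactly the assertion. I do not expect any serious obstacle here; the one point requiring a modicum of care is the weak-to-weak continuity argument together with the fact that the weak closure of a relatively weakly compact set is weakly compact, both of which are standard facts from Banach space theory and could alternatively be cited rather than reproved.
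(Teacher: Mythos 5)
Your proposal is correct and matches the paper's treatment: the paper simply declares Corollary \ref{Cor:1} to be a particular case of the operator-ideal corollary, and you supply the standard (and accurate) verification that $\mathscr{W}(H^{\infty})$ is indeed an operator ideal via finite rank operators being weakly compact and weak-to-weak continuity of bounded operators.
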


A similar statement holds if we replace $\mathscr{W}(H^{\infty})$ by the ideal of $p$-summing operators of $\clb(H^\infty)$. We refer the reader to  \cite[Chapter 2]{Diestel} for $p$-summing operators on Banach spaces.

Now we comment on Asplund spaces. Recall that a Banach space $X$ is called an \textit{Asplund space} if, whenever $g$ is a convex continuous function on an open convex subset $V$ of $X$, the set of all points of $V$ where $g$ is Fr\'{e}chet differentiable form a dense $G_{\delta}$-subset of $V$. This notion was introduced by Asplund under the name \textit{strong differentiability space} \cite{Asplund}. The following fundamental theorem gives a satisfactory classification of Asplund spaces \cite{Namioka, Stegall}:

\begin{theorem}
Let $X$ be a Banach space. Then the following conditions are equivalent:
\begin{enumerate}\label{Thm: Asp}
\item $X$ is an Asplund space;
\item $X^*$ has the Radon-Nikod\v{y}m property;
\item every separable subspace of $X$ has a separable dual.			
\end{enumerate}	
\end{theorem}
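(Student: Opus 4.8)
The equivalence is the classical Namioka--Phelps--Stegall description of Asplund spaces, and I would organise the proof as the cycle $(1)\Rightarrow(2)\Rightarrow(3)\Rightarrow(1)$, isolating the analytic content of each arrow. For $(1)\Rightarrow(2)$ I would go through the dentability form of the Radon--Nikod\'ym property: a dual space $X^{*}$ has the RNP iff every nonempty bounded subset of $X^{*}$ is dentable, i.e. admits weak$^{*}$-open slices of arbitrarily small norm-diameter. To produce such slices, fix a bounded $A\subseteq X^{*}$ and consider its support function $p_A(x)=\sup_{f\in \overline{A}^{\,w^{*}}}\operatorname{Re} f(x)$, a continuous convex function on $X$. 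Since $X$ is Asplund, $p_A$ is Fr\'echet differentiable at some $x_0$, and the \v{S}mulyan lemma then forces the slice $\{f\in A:\operatorname{Re} f(x_0)>p_A(x_0)-\alpha\}$ to have diameter tending to $0$ as $\alpha\raro0$. Hence every bounded subset of $X^{*}$ is dentable and $X^{*}$ has the RNP.

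For $(2)\Rightarrow(3)$, which is one direction of Stegall's theorem, let $Z\subseteq X$ be separable and argue by contraposition. If $Z^{*}=X^{*}/Z^{\perp}$ were nonseparable, an uncountable $\delta$-separated subset of $B_{Z^{*}}$ can be organised into a bounded dyadic ``bush'' in $Z^{*}$ in which every node is a weak$^{*}$-average of its immediate successors; a careful inductive selection lifts this bush through the quotient map $X^{*}\raro Z^{*}$, with norm control, to a bounded bush in $X^{*}$ having no slices of small diameter, contradicting the RNP of $X^{*}$. Thus $Z^{*}$ is separable for every separable $Z\subseteq X$.

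For $(3)\Rightarrow(1)$ I would split into a separable reduction and the separable case. Property $(3)$ passes to every subspace, so by the standard separable-reduction lemma for the Asplund property (if every separable subspace of $X$ is Asplund then $X$ is Asplund, proved by exhibiting, from a point where a given continuous convex $g$ fails to be Fr\'echet differentiable, an increasing chain of separable subspaces whose union carries the same defect) it suffices to treat separable $X$, where $(3)$ just says $X^{*}$ is separable. For separable $X$ with $X^{*}$ separable I would run the Baire-category argument: the set of points of Fr\'echet differentiability of a continuous convex $g$ is always a $G_{\delta}$ (the sets $D_n$ on which the second-order difference is $\le\|h\|/n$ for small $h$ are open), and density is obtained by writing the complement as a countable union of closed sets with empty interior, using a countable norming family in $S_{X^{*}}$ to reduce the check of Fr\'echet (not merely G\^ateaux) differentiability to countably many directions.

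The routine parts are the dentability$\,\Leftrightarrow\,$RNP equivalence, the \v{S}mulyan lemma, and the bookkeeping of the separable reductions. The genuine difficulties are (i) the bush/tree construction in $(2)\Rightarrow(3)$ that converts nonseparability of a dual into a concrete failure of dentability --- the technical core of Stegall's theorem, where lifting the bush through the quotient while keeping it bounded and non-dentable requires care --- and (ii) the separable Baire-category step in $(3)\Rightarrow(1)$, whose delicate point is using separability of $X^{*}$ to obtain a countable set of test directions detecting the \emph{uniform} (Fr\'echet) behaviour of the difference quotients. I would develop (i) and (ii) first and treat the remaining implications as connective tissue.
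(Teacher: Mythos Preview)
Your outline is a faithful sketch of the classical Namioka--Phelps--Stegall argument, and the cycle $(1)\Rightarrow(2)\Rightarrow(3)\Rightarrow(1)$ you describe is the standard route; the identification of the genuine difficulties in Stegall's bush construction and in the Baire-category step is accurate. However, there is nothing to compare against: the paper does not prove this theorem at all. It is quoted as a known classification result, with attribution to \cite{Namioka, Stegall}, and is used only to observe that $H^\infty$ is not Asplund (via the separable subalgebra $A(\mathbb{D})$ with nonseparable dual), so that the main theorem of the paper is not a consequence of the existing Asplund-space theory. Your effort here is therefore misdirected --- no proof is expected or supplied in the paper for this statement.
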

We refer to \cite{Diestel 2} for the notion of Radon-Nikod\v{y}m property. We now remark that the disc algebra $A(\bd)$ (a subalgebra of $H^\infty$) consisting of bounded analytic functions that are continuous on $\overline{\bd}$ is separable. However, its dual $A(\bd)^*$ is nonseparable. Consequently, Theorem \ref{Thm: Asp} implies that $H^\infty$ is not Asplund. Therefore, Theorem \ref{thm:main}, that is, the Bishop-Phelps-Bollob\'{a}s property for $\clb(H^\infty)$ does not follow from the existing theory of Asplund spaces (cf. \cite{Cascales}). For more information and recent development on Asplund spaces, we refer the reader to \cite{Brooker1,CFMM}.

\vspace{0.3in}

\noindent\textit{Acknowledgement:} We would like to thank the referee for pointing out an error in the earlier version of the manuscript and for the valuable comments and suggestions. We are also thankful to Professor Gilles Pisier for his helpful remarks on Asplund spaces. The research of first author is supported by the Department of Mathematics, IIT Bombay, India. The research of second author is supported in part by the Theoretical Statistics and Mathematics Unit, Indian Statistical Institute, Bangalore Centre, India and the INSPIRE grant of Dr. Srijan Sarkar (Ref: DST/INSPIRE/04/2019/000769), Department of Science \& Technology (DST), Government of India. The third author is supported in part by the Core Research Grant (CRG/2019/000908), by SERB, Department of Science \& Technology (DST), and NBHM (NBHM/R.P.64/2014), Government of India. The research of the fourth author is supported by the NBHM postdoctoral fellowship, Department of Atomic Energy (DAE), Government of India (File No: 0204/3/2020/R$\&$D-II/2445).

\bibliographystyle{amsplain}

\begin{thebibliography}{99}

\bibitem{Acosta surv}
M. Acosta, {\em Denseness of norm attaining mappings}, RACSAM. Rev. R. Acad. Cienc. Exactas F\'{i}s. Nat. Ser. A Mat. 100 (2006), 9–30.
	
\bibitem{acosta}
M. Acosta, R. Aron, D. Garc\'{i}a and M. Maestre, {\em The Bishop-Phelps-Bollob\'{a}s theorem for operators}, J. Funct. Anal. 254 (2008), 2780–2799.
	
\bibitem{acosta 2}
M. Acosta, J. Alaminos, D. Garc\'{i}a and M. Maestre, {\em On holomorphic functions attaining their norms}, Special issue dedicated to John Horv\'{a}th. J. Math. Anal. Appl. 297 (2004), 625–644.
	
\bibitem{Aron2}
R. Aron, B. Cascales and O. Kozhushkina, {\em The Bishop-Phelps-Bollob\'{a}s theorem and Asplund operators}, Proc. Amer. Math. Soc. 139 (2011), 3553–3560.
	
\bibitem{Aron1}
R. Aron, Y. S. Choi, S. K. Kim, H. J. Lee and M. Martín, {\em The Bishop-Phelps-Bollob\'{a}s version of Lindenstrauss properties A and B}, Trans. Amer. Math. Soc. 367 (2015), 6085–6101.
	
\bibitem{AL}
R. Aron and V. Lomonosov, {\em After the Bishop–Phelps theorem}, Acta et Commentationes Universitatis Tartuensis de Mathematica. 18 (2014), 39-49.
	
\bibitem{Asplund}
E. Asplund, {\em Fr\'{e}chet differentiability of convex functions}, Acta Math. 121 (1968), 31–47.
	
\bibitem{Bishop}
E. Bishop and R. Phelps, {\em A proof that every Banach space is subreflexive}, Bull. Amer. Math. Soc. 67 (1961), 97–98.
	
\bibitem{Bollobas}
B. Bollob\'{a}s, {\em An extension to the theorem of Bishop and Phelps}, Bull. Lond. Math. Soc. 2 (1970), 181–182.
	
	
\bibitem{Brooker1}
P. Brooker, {\em Non-Asplund Banach spaces and operators}, J. Funct. Anal. 273 (2017), 3831--3858.
	
\bibitem{Burgain}
J. Bourgain, {\em New Banach space properties of the disc algebra and $H^\infty$}, Acta Math. 152 (1984), 1–48.
	
\bibitem{Cascales}
B. Cascales, A. Guirao and V. Kadets, {\em A Bishop-Phelps-Bollob\'{a}s type theorem for uniform algebras}, Adv. Math. 240 (2013), 370–382.
	
\bibitem{Cho}
H. D. Cho and Y. S. Choi, {\em The Bishop-Phelps-Bollob\'{a}s theorem on bounded closed convex sets}, J. Lond. Math. Soc. 93 (2016), 502–518.
	
\bibitem{CFMM}
F. Cobos, L. Fern\'{a}ndez-Cabrera, A. Manzano and A. Mart\'{\i}nez,  {\em On interpolation of Asplund operators}, Math. Z. 250 (2005), 267--277. 	
	
	
\bibitem{Dales}
H. Dales, P. Aiena, J. Eschmeier, K. Laursen and G. Willis, {\em Introduction to Banach algebras, operators, and harmonic analysis}, London Mathematical Society Student Texts, 57. Cambridge University Press, Cambridge, 2003.
	
\bibitem{Dantas}
S. Dantas, D. Garc\'{i}a, M. Maestre and M. Mart\'{i}n, {\em The Bishop-Phelps-Bollob\'{a}s property for compact operators}, Canad. J. Math. 70 (2018), 53–73.
	
\bibitem{Diestel}
J. Diestel, H. Jarchow, and A. Tonge, {\em Absolutely summing operators}, Cambridge Studies in Advanced Mathematics, 43. Cambridge University Press, Cambridge, 1995.
	
\bibitem{Diestel 2}
J. Diestel and J. Uhl Jr. {\em Vector measures}, With a foreword by B. J. Pettis. Mathematical Surveys, No. 15. American Mathematical Society, Providence, R.I. 1977.
	

\bibitem{GAMELIN} T. W. Gamelin, {\em The Shilov boundary of $H\sp{\infty}(U)$}, Amer. J. Math.  96 (1974), 79--103.

\bibitem{GJ}
L. Gillman, M. Jerison, {\em Rings of continuous functions}. Reprint of the 1960 edition. Graduate Texts in Mathematics, No. 43. Springer-Verlag, New York-Heidelberg, 1976. xiii+300 pp
	
\bibitem{Hoffman}
K. Hoffman, {\em Banach spaces of analytic functions}, Prentice-Hall Series in Modern Analysis Prentice-Hall, Inc., Englewood Cliffs, N. J. 1962.
	
\bibitem{HS}
N. Hindman and D. Strauss,  {\em Algebra in the Stone-\u{C}ech compactification. Theory and applications.} Second revised and extended edition. De Gruyter Textbook. Walter de Gruyter \& Co., Berlin, 2012. xviii+591 pp.
	
\bibitem{KIM}
S. K. Kim, {\em Stone-\v{C}ech compactifications of infinite discrete spaces}, J. Korean Math. Soc. {\bf 6} (1969), 37--40.
	
\bibitem{Kim}	
S. K. Kim and H. J. Lee, {\em A Urysohn-type theorem and the Bishop-Phelps-Bollob\'{a}s theorem for holomorphic functions}, J. Math. Anal. Appl. 480 (2019), 123393, 8 pp.
	
\bibitem{Klee}
V. Klee, {\em Extremal structures of convex sets}, Math. Z. 69 (1958), 90–104.
	
	
\bibitem{L}
J. Lindenstrauss, {\em On operators which attain their norm}, Israel J. Math. 1 (1963) 139–148.
	
\bibitem{Lomonosov}
V. Lomonosov, {\em A counterexample to the Bishop-Phelps theorem in complex spaces}, Israel J. Math. 115 (2000), 25–28.
	
	
\bibitem{Namioka}
I. Namioka and R. Phelps, {\em Banach spaces which are Asplund spaces}, Duke Math. J. 42 (1975), 735–750.
	
	
\bibitem{Pisier}
G. Pisier, {\em Factorization of operator valued analytic functions}, Adv. Math. 93 (1992), 61–125.
	
\bibitem{Rudin}
W. Rudin, {\em Real and complex analysis}, Third edition. McGraw-Hill Book Co., New York, 1987.
	
\bibitem{Semadeni}Z. Semadeni, {\it Banach spaces of continuous functions. Vol. I}, Monografie Matematyczne, Tom 55, PWN---Polish Scientific Publishers, Warsaw, 1971.
	
\bibitem{Stegall}	
C. Stegall, {\em The Radon-Nikodym property in conjugate Banach spaces}, Trans. Amer. Math. Soc. 206 (1975), 213–223.
\end{thebibliography}

\end{document}